\newtheorem{thm}{Theorem}[section]
\newtheorem{cor}[thm]{Corollary}
\newtheorem{prop}[thm]{Proposition}
\newtheorem{lem}[thm]{Lemma}
\theoremstyle{definition}
\newtheorem{defn}[thm]{Definition}
\newtheorem{defns}[thm]{Definitions}
\theoremstyle{remark}
\newtheorem{rem}[thm]{Remark}
\let\c@equation\c@thm
\numberwithin{equation}{section}
\title{Refined Limit Multiplicity for Varying Conductor}
\author{John Binder}
\begin{document}
\newcommand{\NN}{\mathscr{N}}
\newcommand{\CC}{\mathbb{C}}
\newcommand{\DDD}{\mathscr{D}}
\newcommand{\HH}{\mathcal{H}}
\newcommand{\RR}{\mathbb{R}}
\newcommand{\RRR}{\mathscr{R}}
\newcommand{\FF}{\mathcal{F}}
\newcommand{\KK}{\mathscr{K}}
\newcommand{\UU}{\mathscr{U}}
\newcommand{\II}{\mathscr{I}}
\newcommand{\EE}{\mathscr{E}}
\newcommand{\GG}{\mathscr{G}}
\newcommand{\PZ}{\mathbb{P}_{\mathbb{Z}}}
\newcommand{\ZZ}{\mathbb{Z}}
\newcommand{\PP}{\mathscr{P}}
\newcommand{\PPP}{\mathscr{P}}
\newcommand{\SSS}{\mathcal{S}}
\newcommand{\LL}{\mathscr{L}}
\newcommand{\MM}{\mathscr{M}}
\newcommand{\AAA}{\mathbb{A}}
\newcommand{\GGG}{\mathscr{G}}
\newcommand{\AAAA}{\AAA}
\newcommand{\Df}{\mathcal{D}_F}
\newcommand{\QQ}{\mathbb{Q}}
\newcommand{\QQQ}{\mathscr{Q}}
\newcommand{\DD}{\mathscr{D}}
\newcommand{\OO}{\mathcal{O}}
\newcommand{\VV}{\mathscr{V}}
\newcommand{\pp}{\mathfrak{p}}
\newcommand{\qq}{\mathfrak{q}}
\newcommand{\faa}{\mathfrak{a}}
\newcommand{\mm}{\mathfrak{m}}
\newcommand{\IIII}{\mathcal{I}}
\newcommand{\JJ}{\mathscr{J}}
\newcommand{\weak}{\rightharpoonup}
\newcommand{\weaks}{\rightharpoonup^*}
\newcommand{\simga}{\sigma}
\newcommand{\linf}[2]{\left\langle {#1},\, {#2}\right\rangle}
\newcommand{\into}{\hookrightarrow}
\newcommand{\im}{\text{im}}
\newcommand{\lists}[3]{{#1}_1{#2}\ldots {#2}{#1}_{#3}}
\newcommand{\qr}[2]{\left(\frac{#1}{#2}\right)}
\newcolumntype{M}{>{$}c<{$}}
\newcommand{\alg}{\text{alg}}
\newcommand{\Spec}{\mathop{\mathrm{Spec}}\nolimits}
\newcommand{\Proj}{\mathop{\mathrm{Proj}}\nolimits}
\newcommand{\Tr}{\mathop{\mathrm{Tr}}\nolimits}
\newcommand{\Hom}{\mathop{\mathrm{Hom}}\nolimits}
\newcommand{\spa}{\mathop{\mathrm{sp}}\nolimits}
\newcommand{\rank}{\mathop{\mathrm{rank}}\nolimits}
\newcommand{\Pic}{\mathop{\mathrm{Pic}}\nolimits}
\newcommand{\image}{\mathop{\mathrm{Im}}\nolimits}
\newcommand{\tors}{\mathop{\mathrm{tors}}\nolimits}
\newcommand{\ord}{\mathop{\mathrm{ord}}\nolimits}
\newcommand{\Imag}{\mathop{\mathrm{Im}}\nolimits}
\newcommand{\trdeg}{\mathop{\mathrm{trdeg}}\nolimits}
\newcommand{\codim}{\mathop{\mathrm{codim}}\nolimits}
\newcommand{\hei}{\mathop{\mathrm{ht}}\nolimits}
\newcommand{\sgn}{\mathop{\mathrm{sgn}}\nolimits}
\newcommand{\Gal}{\mathop{\mathrm{Gal}}\nolimits}
\newcommand{\supp}{\mathop{\mathrm{supp}}\nolimits}
\newcommand{\Cl}{\mathop{\mathrm{Cl}}\nolimits}
\newcommand{\CaCl}{\mathop{\mathrm{Ca\,Cl}}\nolimits}
\newcommand{\Div}{\mathop{\mathrm{Div}}\nolimits}
\newcommand{\Sym}{\mathop{\mathrm{Sym}}\nolimits}
\newcommand{\coker}{\mathop{\mathrm{coker}}\nolimits}
\newcommand{\imag}{\mathop{\mathrm{im}}\nolimits}
\newcommand{\End}{\mathop{\mathrm{End}}\nolimits}
\newcommand{\Frob}{\mathop{\mathrm{Frob}}\nolimits}
\newcommand{\Ann}{\mathop{\mathrm{Ann}}\nolimits}
\newcommand{\Art}{\mathop{\mathrm{Art}}\nolimits}
\newcommand{\rec}{\mathop{\mathrm{rec}}\nolimits}
\newcommand{\Aut}{\mathop{\mathrm{Aut}}\nolimits}
\newcommand{\Ad}{\mathop{\mathrm{Ad}}\nolimits}
\newcommand{\nr}{\mathop{\mathrm{nr}}\nolimits}
\newcommand{\cond}{\mathop{\mathrm{cond}}\nolimits}

\newcommand{\Ind}{\mathop{\mathrm{Ind}}\nolimits}
\newcommand{\cInd}{\mathop{\mathrm{c-Ind}}\nolimits}

\newcommand{\spann}{\mathop{\mathrm{span}}\nolimits}
\newcommand{\Vol}{\mathop{\mathrm{Vol}}\nolimits}
\newcommand{\Irr}{\mathop{\mathrm{Irr}}\nolimits}
\newcommand{\Res}{\mathop{\mathrm{Res}}\nolimits}
\newcommand{\genus}{\mathop{\mathrm{genus}}\nolimits}
\newcommand{\scusp}{\mathop{\mathrm{scusp}}\nolimits}
\newcommand{\PProj}{\mathbb{P}\mathop{\mathrm{roj}}\nolimits}
\newcommand{\cones}[3]{\langle v_{#1},\, v_{#2},\, v_{#3}\rangle}
\newcommand{\Rsheaf}{\mathscr{R}}
\newcommand{\Qsheaf}{\mathscr{Q}}
\newcommand{\Ksheaf}{\mathscr{K}}
\newcommand{\Hsheaf}{\mathscr{H}}
\newcommand{\Msheaf}{\mathscr{M}}
\newcommand{\Rei}{\mathcal{R}}
\newcommand{\Rie}{\Rei}
\newcommand{\hol}{\text{hol}}
\newcommand{\Nsheaf}{\mathscr{N}}
\newcommand{\unr}{\text{unr}}
\newcommand{\sHom}{\mathcal{H}om}
\newcommand{\smallmat}[4]{\left(\begin{smallmatrix}{#1} & {#2} \\ {#3} & {#4} \end{smallmatrix} \right)}
\newcommand{\twomat}[4]{\begin{pmatrix}{#1} & {#2} \\ {#3} & {#4} \end{pmatrix}}
\newcommand{\Proh}{\Proj}
\newcommand{\jj}{\mathfrak{j}}
\newcommand{\old}{\text{old}}
\newcommand{\bs}{\backslash}
\newcommand{\diam}[1]{\langle {#1} \rangle}
\newcommand{\Alg}{\textbf{Alg\,}}
\newcommand{\BB}{\mathcal{B}}
\newcommand{\Detla}{\Delta}
\newcommand{\iso}{\xrightarrow{\sim}}
\newcommand{\dep}{\mathop{\mathrm{dep}}\nolimits}
\newcommand{\ind}{\mathop{\mathrm{ind}}\nolimits}
\newcommand{\vol}{\mathop{\mathrm{vol}}\nolimits}
\newcommand{\tr}{\mathop{\mathrm{tr}}\nolimits}
\newcommand{\Stab}{\mathop{\mathrm{Stab}}\nolimits}
\newcommand{\St}{\mathop{\mathrm{St}}\nolimits}
\newcommand{\meas}{\mathop{\mathrm{meas}}\nolimits}
\newcommand{\disc}{\mathop{\mathrm{disc}}\nolimits}
\newcommand{\cusp}{\mathop{\mathrm{cusp}}\nolimits}
\newcommand{\spec}{\mathop{\mathrm{spec}}\nolimits}
\newcommand{\geom}{\mathop{\mathrm{geom}}\nolimits}
\newcommand{\pl}{\mathop{\mathrm{pl}}\nolimits}
\newcommand{\new}{\mathop{\mathrm{new}}\nolimits}

\newcommand{\Lie}{\mathop{\mathrm{Lie}}\nolimits}
\newcommand{\Typ}{\mathop{\mathrm{Typ}}\nolimits}
\newcommand{\diag}{\mathop{\mathrm{diag}}\nolimits}

\newcommand{\trace}{\mathop{\mathrm{trace}}\nolimits}
\newcommand{\ratdeg}{\mathop{\mathrm{ratdeg}}\nolimits}

\newcommand{\inv}{\mathop{\mathrm{inv}}\nolimits}

\newcommand{\mupl}{\widehat \mu^{\mathop{\mathrm{pl}}}}
\newcommand{\mucusp}{\widehat \mu^{\cusp}}
\newcommand{\mudisc}{\widehat \mu^{\disc}}
\newcommand{\GL}{\mathop{\mathrm{GL}}\nolimits}
\newcommand{\PGL}{\mathop{\mathrm{PGL}}\nolimits}
\newcommand{\lev}{\mathop{\mathrm{lev}}\nolimits}

\newcommand{\SL}{\mathop{\mathrm{SL}}\nolimits}
\newcommand{\EP}{\mathop{\mathrm{EP}}\nolimits}
\newcommand{\cind}{\mathop{\mathrm{c\text{-}ind}}\nolimits}
\newcommand{\ad}{\mathop{\mathrm{ad}}\nolimits}

\newcommand{\one}{\mathbf{1}}

\newcommand{\n}{\mathfrak{n}}
\newcommand{\nn}{\mathfrak{n}}
\newcommand{\ff}{\mathfrak{f}}
\newcommand{\oo}{\mathfrak{o}}
\newcommand{\ft}{\mathfrak{t}}
\newcommand{\dd}{\mathfrak{d}}
\newcommand{\fa}{\mathfrak{a}}
\newcommand{\XX}{\mathfrak{X}}

\newcommand{\wh}{\widehat}

\maketitle

\begin{abstract} 
Recent results by Abert, Bergeron, Biringer et al., Finis, Lapid and Mueller, and Shin and Templier have extended the limit multiplicity property to quite general classes of groups and sequences of level subgroups.  Automorphic representations in the limit multiplicity problem are traditionally counted with multiplicity according to the number of fixed vectors of a level subgroup; our goal is to perform a slightly more refined analysis and count only automorphic representations with a given conductor with multiplicity 1.
\end{abstract}

\section{Introduction} 
\label{sec:Intro}
The limit multiplicity problem concerns the asymptotic distribution of the local components of families of automorphic representations.  In particular, it is expected that for families of automorphic representations that arise naturally, the limiting distribution should be the Plancherel measure at the place in question.  

The problem was originally studied by DeGeorge and Wallach \cite{DW78, DW79} who phrased the question in terms of lattices $\Gamma$ in semisimple Lie groups $G$.  They studied the limit multiplicity problem for \emph{normal towers}, or nested sequences of normal subgroups of a fixed maximal lattice whose intersection is the identity; the limit multiplicity problem in this case was completed by Delorme \cite{Del86}.  In recent work, Abert, Bergeron, et al., Finis, Lapid, and Mueller, and Shin and Templier have solved the limit multiplicity problem for a large class of sequences of compact open subgroups in reductive groups $G$ (see, for instance, \cite{ABB+12}, \cite{FL15, FLM14}, \cite{Shi12, ST12}).  In this paper, it is our goal to eliminate a pesky `multiplicity' term from the statement of the Limit Multiplicity problem (at least for forms of $\GL_n$) and to isolate representations of a given conductor, simply counting each with multiplicity 1.

To state our goal, we'll need to clarify the statement of the limit multiplicity problem, following the introduction of \cite{FL15}.  Let $F$ be a number field and let $S_\infty$ denote its set of infinite places.  Let $G/F$ be a reductive algebraic group and let $S \supseteq S_\infty$ be a finite set of places.  We write $F_S = \prod_{v\in S} F_v$ and let $\AAA^S$ be the restricted direct product of $F_\pp$ for $\pp \not \in S$.  We define $G(F_S)^1$ as the intersection of the kernels of the maps $|\chi|_S: G(F_S) \to \RR^\times_{>0}$ where $\chi$ ranges over the $F$-rational characters of $G$; the subgroup $G(\AAA)^1$ is defined similarly.  We define $G(F_S)^{1,\wedge}$ as the space of irreducible, admissible, unitary representations of $G(F_S)^1$.  Fix compatible Haar measures on the groups $G(F_S)^1$, $G(\AAA^S)$, and $G(\AAA)^1$.  Let $K^S \subseteq G(\AAA^S)$ be an open compact subgroup.  Then we define the \emph{counting measure} $\wh\mu_{K^S}$ with respect to $K^S$ on $G(F_S)^{1,\wedge}$ by
$$\wh \mu_{K^S}(\wh f_S) = \frac{\vol(K^S)}{\vol(G(F)\bs G(\AAA)^1)} \sum_{\pi \in G(\AAA)^{1,\wedge}} m_\pi \dim(\pi^S)^{K^S} \wh f_S(\pi_S).$$
Here the sum runs over discrete automorphic $G(\AAA)^1$-representations $\pi$, and $m_\pi$ is the multiplicity of $\pi$ in $L^2(G(F)\bs G(\AAA)^1)$.

The \emph{Plancherel measure} $\mupl$ on $G(F_v)^{1,\wedge}$ for a reductive group $G$ is defined in \cite{Wal03} when $F_v$ is $\pp$-adic, and \cite{Dix77} or \cite{Wal92} when $F_v$ is archimedean; the measure on $G(F_S)^{1,\wedge}$ is the product of these local measures (see \ref{Plancherel} for more details).  Let $\mathscr{F}(G(F_S)^1)$ be the set of bounded, complex-valued functions on $G(F_S)^{1,\wedge}$ that are supported on a finite number of Bernstein components, and that are continuous outside a set of Plancherel measure $0$.  We say that a sequence of subgroups $\{K^S\}$ satisfies the limit multiplicity property if for any $\wh f_S \in \mathscr{F}(G(F_S)^1)$, we have
$$\wh \mu_{K^S}(f_S) \to \mupl(f_S).$$

Recently, the limit multiplicity property has been proven under a wide array of assumptions.  We list some recent work here:
\begin{itemize} 
\item Let $G$ be a Lie group with maximal compact subgroup $K$.  In \cite{ABB+12}, Abert, Bergeron, Biringer et al. proved a limit multiplicity result for sequences $\{\Gamma_n\}$ of lattices in $G$ that are \emph{uniformly discrete} and such that the symmetric spaces $\Gamma_n\bs G / K$ converge to $G / K$ in the sense of \emph{Benjamini-Schramm}. This generalizes the notion of the normal towers of DeGeorge-Wallach and Delorme.
\item In \cite{Shi12}, Shin proved a variant of limit multiplicity for reductive groups $G$ over totally real fields $F$ and for sequences of subgroups that `converge to 1' in an appropriate sense.  An important stipulation was that the test function $f_\infty$ at the real places be an Euler-Poincar\'{e} function; this cuts out a given discrete-series $L$-packet at $\infty$.  The trace formula for such functions is particularly `user friendly'.  A similar argument was given in Section 9 of \cite{ST12}; he and Templier apply this equidistribution result to a result on low-lying zeros of automorphic $L$-functions.
\item In \cite{FLM14}, Finis, Lapid, and Mueller proved the limit multiplicity property for full level subgroups of reductive groups satisfying their conditions (BD) and (TWN) (see Section 5 of \cite{FLM14}), and proved that $\GL_n$ and $\SL_n$ satisfy these conditions.  In \cite{FL14}, Finis and Lapid proved quite general orbital integral bounds, which they used in \cite{FL15} to prove the limit multiplicity property for any sequence of level subgroups whose level goes to $\infty$ (again, under the assumption that the ambient group $G$ satisfies (BD) and (TWN)).
\end{itemize}

It is our goal to prove a slight refinement of the limit multiplicity property in a special case.  Specifically, let $D$ a be group of units in a central division algebra of dimension $n^2$ over a number field $F$ and let $S$ be a finite set of places of $F$, containing the infinite places and also all places at which $D$ does not split.  Then $D(\AAA^S)\cong \GL_n(\AAA^S)$.  If $\pi$ is a discrete automorphic representation and $\pi^S$ is generic then we may discuss the conductor of $\pi^S$ as a $\GL_n(\AAA^S)$-representation.  

In this situation, the conductor has a nice description in terms of open compact subgroups.  Let $\mathbf{K}_{n}^S$ be the maximal compact subgroup $\GL_n(\oo^S)$.  Let $\nn$ be an ideal coprime to $S$ and define the subgroup $K_n(\nn) \leq \mathbf{K}^S$ as the set of matrices
$$\twomat{X}{Y}{Z}{W}$$
where $X \in \mathbf{K}_{n-1}$, $Y$ is an $(n - 1) \times 1$ column vector in of elements of $\oo^S_F$, $Z$ is a $1 \times (n-1)$ row vector of ad\`{e}les divisible by $\nn$, and $W$ is an ad\`{e}le with $W - 1\in \nn$.

It is a classical result of Jacquet, Pietetski-Shapiro, and Shalika (\cite{JPS81}) that a generic representation $\pi$ has a $K_{n}(\nn)$-fixed vector if and only if $c(\pi) \mid \nn$.  Therefore, if we plug $K^S = K_n(\nn)$ into the limit multiplicity problem, the counting measure $\mu_{K^S}$ isolates generic representations $\pi$ whose conductor is divides $\nn$.

We wish to refine this result and isolate the representations whose conductor away from $S$ is \emph{precisely} $\nn$.  In particular, we will construct a test function $e^{\new}_{\nn}$ such that, for a generic representation $\pi$ of $\GL_n(\AAA)$, $\tr \pi(e^{\new}_{\nn}) = 1$ if $\pi$ has conductor $\nn$, and zero otherwise.  An important input into the construction of this function is a result of Reeder (\cite{Ree91}) which counts the dimension of the $K_0(\nn)$-fixed  space in a representation of given conductor.  Our theorem is as follows:

\begin{thm}\label{Thm1} Fix a number field $F$ and let $D/F$ be the group of units of a central division algebra of dimenson $n^2$.  Let $S\supseteq S_\infty$ be a finite set of places, such that $D$ splits at all $\pp\not\in S$.  Assume moreover there is a place $v_0\in S$ at which $D$ splits.  For $\wh f_S\in\mathscr{F}(D(F_S)^1)$ define 
$$\wh\mu_{S,\nn}(\wh f_S) = \frac{1}{e_{\nn}^{\new}(1)\cdot\vol(D(F)\bs D(\AAA)^1)} \sum_{\substack{\pi\\c(\pi^S) = \nn}} \wh f_S(\pi_S)$$
where the sum runs over discrete automorphic representations $\pi$ with $c(\pi^S) = \nn$ such that $\pi^S$ is generic.

Then $\wh \mu_{S,\nn}(\wh f_S) \to \mupl_S(\wh f_S)$ as $N(\nn) \to \infty$.
\end{thm}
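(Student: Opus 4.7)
The plan is to construct the test function $e^{\new}_\nn$ explicitly as a finite signed combination of normalized characteristic functions $\one_{K_n(\mm)}/\vol(K_n(\mm))$ with $\mm \mid \nn$, and then to deduce the refined limit multiplicity by reducing, prime by prime, to the standard limit multiplicity for $K_n(\mm)$-level subgroups proved in \cite{FL15}.

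First I would make $e^{\new}_\nn$ explicit. By Reeder's formula, for a generic local representation $\pi_\pp$ of conductor $\pp^c$ and an integer $m \ge c$, the dimension $\dim \pi_\pp^{K_n(\pp^m)}$ is a fixed polynomial in $m - c$. M\"obius inversion on the chain of divisors $\pp^i \mid \pp^{v_\pp(\nn)}$ then produces rational coefficients $c_{\pp, i}$, supported on boundedly many $i$ near $v_\pp(\nn)$, such that the local piece $\sum_i c_{\pp, i}\,\one_{K_n(\pp^i)}/\vol(K_n(\pp^i))$ satisfies $\tr \pi_\pp(\cdot) = \one[c(\pi_\pp) = v_\pp(\nn)]$ on the generic spectrum. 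Taking the restricted tensor product over $\pp \mid \nn$ (and the characteristic function of the maximal compact elsewhere) gives the global formula $e^{\new}_\nn = \sum_{\mm \mid \nn} c_\mm\, \one_{K_n(\mm)}/\vol(K_n(\mm))$.

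Second, JPS implies that $K_n(\mm)$-invariant vectors only occur in generic representations, and for the discrete $\GL_n(\AAA^S)$-spectrum the generic representations satisfy multiplicity one (via Jacquet-Langlands). Therefore
$$\sum_{c(\pi^S) = \nn} \wh f_S(\pi_S) = \sum_\pi m_\pi \tr\pi^S(e^{\new}_\nn) \wh f_S(\pi_S) = \sum_{\mm \mid \nn} c_\mm \sum_\pi m_\pi \dim\pi^{K_n(\mm)} \wh f_S(\pi_S).$$
Applying the \cite{FL15} limit multiplicity result to each $K_n(\mm)$-sum and summing up, one formally obtains the main term $\vol(D(F)\bs D(\AAA)^1)\cdot e^{\new}_\nn(1) \cdot \mupl_S(\wh f_S)$, which after dividing by the normalization in $\wh\mu_{S, \nn}$ gives exactly $\mupl_S(\wh f_S)$.

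The main obstacle is that the pointwise application of \cite{FL15} gives only an $\mm$-dependent error bound, whereas we need to control the \emph{signed} linear combination of errors uniformly as $N(\nn) \to \infty$. I would address this by invoking the Arthur-Selberg trace formula directly with the test function $e^{\new}_\nn \otimes f_S$ (where $f_S$ is obtained by Plancherel inversion from $\wh f_S$), extracting the main term on the geometric side from the identity contribution and bounding the non-central orbital integrals using the uniform orbital integral bounds of \cite{FL14}. The key structural input is that $\vol(K_n(\mm))$ for $\mm \mid \nn$, $\mm \ne \nn$, is strictly larger than $\vol(K_n(\nn))$, so that the normalized test function $e^{\new}_\nn / e^{\new}_\nn(1)$ behaves on orbital integrals like $\one_{K_n(\nn)}/\vol(K_n(\nn))$, allowing the \cite{FL14, FL15} estimates for the $K_n(\nn)$-case to carry over essentially unchanged.
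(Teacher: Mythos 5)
Your overall strategy---construct $e^{\new}_\nn$ as a signed combination of idempotents via Reeder's formula and inclusion-exclusion, plug $e^{\new}_\nn\otimes f_S$ into the Selberg trace formula for $D$, extract the identity contribution, and kill the remaining orbital integrals using the Finis--Lapid estimates---is the same route the paper takes, and the key heuristic you identify (that $h_{n,\nn}=e^{\new}_\nn/e^{\new}_\nn(1)$ is dominated by a sum of normalized characteristic functions with rapidly decaying coefficients, so the orbital integral bounds for $\one_{K_n(\nn)}$ carry over) is exactly the content of Lemmas~\ref{lem42}--\ref{lem44}. You are also right to abandon the black-box application of the FL15 limit multiplicity theorem to each $K_n(\mm)$: the error terms there are not controlled uniformly over the signed combination, so one must run the trace formula with $e^{\new}_\nn$ directly.

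However, there is a genuine gap in your second paragraph. You assert that ``JPS implies that $K_n(\mm)$-invariant vectors only occur in generic representations.'' This is false; JPS gives a conductor criterion \emph{for generic} representations, but non-generic representations can still have $K_n(\mm)$-fixed vectors. The paper points this out explicitly after Theorem~\ref{Reeder}: the trivial representation has $\dim\pi^{K_n(\pp^r)}=1$ for every $r$. Consequently, the spectral side of the trace formula applied to $e^{\new}_\nn\otimes f_S$ includes contributions from discrete automorphic $\pi$ that are \emph{not} generic away from $S$, and for those $\pi$ the weight $\tr\pi^S(e^{\new}_\nn)$ need not be the indicator of $c(\pi^S)=\nn$. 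Since the theorem's statement only counts $\pi$ with $\pi^S$ generic, you must show that the non-generic contribution vanishes in the limit. This is precisely why the hypothesis that $D$ splits at some $v_0\in S$ appears: via global Jacquet--Langlands, a $\pi$ that is not generic away from $S$ corresponds to a residual automorphic representation of $\GL_n$, which is non-tempered at every split place, hence non-tempered at $v_0$. One then uses Sauvageot's density theorem (Remark~\ref{rem26}) to show the non-tempered part of the spectrum contributes $o(1)$. Your proposal never invokes $v_0$ and never confronts this issue, so as written the deduction from the trace-formula identity to the theorem's statement does not close. A smaller omission, likely intentional, is that you do not spell out the Sauvageot density step needed to pass from Hecke functions $\wh\phi_S$ to general $\wh f_S\in\mathscr{F}(D(F_S)^1)$; the paper treats this as routine but it must be included.
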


The proof follows from an asymptotic bound on the orbital integrals of our test function (see section \ref{sec:4}) and then a relatively standard trace formula argument.

We will also prove a fixed-central-character analog of the limit multiplicity result in this case, following the author's previous work (\cite{Bin15}).  Let $\chi:\AAA^\times \to \CC^\times$ be an automorphic character of conductor $\ff$, and let $\chi_S,\, \chi^S$ be its components at $S$ and away from $S$ respectively; write $\ff^S$ for the conductor of $\chi^S$.  Let $D(F_S)^{\wedge,\chi_S}$ be the subset of $D(F_S)^{\wedge}$ consisting of those representations whose central character is $\chi_S$. Let $\mathscr{F}(D(F_S), \,\chi_S)$ be the space of bounded functions on $D(F_S)^{\chi_S}$ that are supported on finitely many Bernstein components and which are almost-everywhere continuous.  We will construct a test function $e^{\new}_{\nn,\chi}$ in the fixed-central-character Hecke algebra $\HH(D(\AAA^S),\, \chi^S)$ that will cut out generic representations with central character $\chi$.  This will allow us to prove the following fixed-central-character analog of Theorem \ref{Thm1}:

\begin{thm} \label{Thm2} Let $D,\, F,\, S$ be as above and let $\chi: \AAA^\times \to \CC^\times$ be an automorphic character.  Let $\ff^S\mid \nn$ and let $f_S\in \mathscr{F}(D(F_S),\,\chi_S)$.  Set
$$\wh\mu_{S,\nn,\chi}(\wh f_S) = \frac{1}{e_{\nn,\chi}^{\new}(1)\cdot\vol(D(F)Z(\AAA)\bs D(\AAA))} \sum_{\substack{\chi_\pi = \chi  \\ c(\pi^S) = \nn}} \wh f_S(\pi_S)$$
where the sum runs over discrete automorphic representations $\pi$ with central character $\chi$, conductor $\nn$, and such that $\pi^S$ is generic.

Then $\wh \mu_{S,\nn}(\wh f_S) \to \mupl_{S,\chi}(\wh f_S)$ as $N(\nn) \to \infty$, where $\mupl_{S,\chi}$ is the fixed-central-character Plancherel measure on $D(F_S)^{\wedge,\chi}$.
\end{thm}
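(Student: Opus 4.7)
The plan is to run the argument of Theorem~\ref{Thm1} in the fixed-central-character setting, replacing Arthur's trace formula by its fixed-central-character variant (following the author's earlier work \cite{Bin15}) and using a test function $e^{\new}_{\nn,\chi}\in\HH(D(\AAA^S),\chi^S)$ that isolates generic representations of central character $\chi^S$ and conductor exactly $\nn$.

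First I would construct $e^{\new}_{\nn,\chi}$ in parallel to $e^{\new}_\nn$: each $K_n(\mm)$-biinvariant basis element is replaced by its projection to the $\chi^S$-eigenspace for the center, and then the same Reeder-style inclusion–exclusion over divisors $\mm\mid\nn/\ff^S$ is applied. The hypothesis $\ff^S\mid\nn$ is precisely what ensures each such projector is non-zero, since $\chi^S$ must be trivial on $Z(\AAA^S)\cap K_n(\nn)$. By construction, $\tr\pi^S(e^{\new}_{\nn,\chi})=1$ when $\pi^S$ is generic with central character $\chi^S$ and conductor $\nn$, and vanishes otherwise. Given $\wh f_S\in\mathscr{F}(D(F_S),\chi_S)$, choose $f_S\in\HH(D(F_S),\chi_S)$ realising $\wh f_S$ on representations of central character $\chi_S$; then the discrete spectral side of the fixed-central-character trace formula applied to $f_S\otimes e^{\new}_{\nn,\chi}$ equals
$$\vol(D(F)Z(\AAA)\bs D(\AAA))\cdot e^{\new}_{\nn,\chi}(1)\cdot \wh\mu_{S,\nn,\chi}(\wh f_S),$$
while the identity term on the geometric side matches $\mupl_{S,\chi}(\wh f_S)$ up to the same normalisation via the local Plancherel formula with central character.

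The remaining task — and the main technical obstacle — is to show that both the non-identity geometric terms and the non-discrete spectral terms are $o(e^{\new}_{\nn,\chi}(1))$ as $N(\nn)\to\infty$. The geometric bound should follow from the orbital-integral estimates of Section~\ref{sec:4} applied to the constituents of $e^{\new}_{\nn,\chi}$, together with the usual counting of rational conjugacy classes meeting the support of the test function at $S$; the averaging over $Z(\AAA^S)$ built into $e^{\new}_{\nn,\chi}$ contributes only a bounded multiplicative factor (the size of the relevant quotient of $Z$), so the estimates transfer directly. The continuous-spectrum bound is the more delicate piece: it requires controlling the intertwining operators appearing in Arthur's spectral expansion, which I would handle by adapting the approach of \cite{Bin15} — using the polynomial intertwining bounds of \cite{FLM14} together with the orbital-integral estimates of Section~\ref{sec:4} to show that the continuous contribution grows strictly slower than $e^{\new}_{\nn,\chi}(1)$. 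Once both bounds are in place, dividing through by $e^{\new}_{\nn,\chi}(1)\cdot\vol(D(F)Z(\AAA)\bs D(\AAA))$ produces the claimed convergence $\wh\mu_{S,\nn,\chi}(\wh f_S)\to \mupl_{S,\chi}(\wh f_S)$.
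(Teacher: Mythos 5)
Your proposal contains several significant misreadings of the setup, the most consequential being the discussion of the continuous spectrum.

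\textbf{There is no continuous spectrum to bound.} The group $D$ is the unit group of a central division algebra, so $D$ has no proper parabolic subgroups and $D(F)\backslash D(\AAA)^1$ is \emph{compact}. The relevant trace formula is the Selberg trace formula for compact quotient (Theorem~\ref{Trace}), whose spectral side is purely discrete. The entire paragraph you devote to ``controlling the intertwining operators appearing in Arthur's spectral expansion'' via \cite{FLM14} addresses a problem that does not arise here; indeed, the paper chooses $D$ precisely to avoid it. What does need to be shown on the geometric side is the asymptotic vanishing of the noncentral orbital integrals $O_{\gamma^S}(h_{n,\nn,\chi})$ and of the central terms $h_{n,\nn,\chi}(z)$ for $z\neq 1$; the former is Proposition~\ref{prop410}, which requires its own argument (Lemma~\ref{lem49}) because the averaged test function is supported on $Z\cdot K'(\pp^{r})$ rather than on $K_n(\pp^r)$, and the relevant normalizing volume changes.

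\textbf{The test function does not vanish on non-generic representations.} You assert that $\tr\pi^S(e^{\new}_{\nn,\chi})=1$ when $\pi^S$ is generic of the right conductor ``and vanishes otherwise.'' Reeder's dimension formula (Theorem~\ref{Reeder}) holds \emph{only} for generic $\pi_\pp$, so the inclusion--exclusion controls the trace only on generic representations; for a non-generic $\pi^S$ the trace of $e^{\new}_{\nn,\chi}$ is a priori nonzero (e.g.\ the trivial representation has a one-dimensional fixed space for every level). This is exactly the reason the theorem assumes the existence of a split place $v_0\in S$: one passes through the global Jacquet--Langlands correspondence of \cite{Bad07}, observes that a $\pi$ with non-generic $\pi^S$ has non-cuspidal transfer and is therefore non-tempered at $v_0$ (by \cite{Wal84}, \cite{Clo90}), and then uses the temperedness refinement of Sauvageot (Remark~\ref{rem26}) to discard the non-tempered part of the test function without affecting the Plancherel integral. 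Your argument omits this step entirely, and it is precisely where the hypothesis on $v_0$ is used.

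\textbf{General $\wh f_S\in\mathscr{F}(D(F_S),\chi_S)$ cannot simply be ``realised'' by a Hecke function.} The space $\mathscr{F}$ of bounded almost-everywhere-continuous functions is strictly larger than the image of $\HH(D(F_S),\chi_S)$ under $\phi\mapsto\wh\phi$. One first proves the statement for $\wh f_S=\wh\phi_S$ with $\phi_S\in\HH(D(F_S),\chi_S)$, and then passes to general $\wh f_S$ via Sauvageot's density theorem (Theorem~\ref{Sauvageot}), as in the proof of Proposition~\ref{prop52}. This approximation step is essential and is absent from your outline.

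The construction of $e^{\new}_{\nn,\chi}$ via the averaging map and the role of the hypothesis $\ff^S\mid\nn$ are captured correctly in spirit, though the phrase ``inclusion--exclusion over divisors $\mm\mid\nn/\ff^S$'' does not match Definition~\ref{defn33}, where the inclusion--exclusion is local, over $i=0,\dots,n$ at each prime dividing $\nn$.
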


A brief note on our choice of algebraic group: We have chosen $D$ to be the group of units in a division algebra in order to simplify the trace formula. Since $D$ has no proper parabolic subgroups, then the quotient $D(F)\bs D(\AAA)^1$ is compact and the spectral side of the trace formula consists only of orbital integrals.  In view of the existing work of Finis-Lapid, these will prove easy to bound.  If we choose `more complicated' forms of $\GL_n$ (such as $\GL_n(D)$ or $U(n)$), similar results may be obtained using the same test function and applying a more difficult version of the trace formula.

Our decision to count representations $\pi$ where $\pi^S$ is generic is natural; these are the automorphic $D(\AAA)^1$-representations whose image under the global Jacquet-Langlands functor of \cite{Bad07} is cuspidal.  Moreover, the stipulation that $S$ contain a place $v_0$ at which $D$ splits is a technical condition used to show that the proportion of non-generic representations in the limit multiplicity formula vanishes.

This paper will be organized as follows: in section \ref{sec:2}, we will briefly discuss the basics on Hecke algebras and give the necessary prerequisites.  In section \ref{sec:3}, we will define our test functions $e^{\new}$ for isolating representations of a given conductor.  In section \ref{sec:4}, we will prove bounds on their orbital integrals.  Finally, in section \ref{sec:5} we use the results of the previous sections to prove our main theorems \ref{Thm1} and \ref{Thm2}.

\subsection{Acknowledgements:} I am grateful to my adviser, Sug Woo Shin, for his interest in this project, and to Julee Kim for useful conversations.  Andy Soffer suggested the elegant approach to Proposition 3.2.

\section{Hecke algebras, Plancherel measure, and other prerequisites}
\label{sec:2}
In this section, we briefly discuss Hecke algebras in the fixed- and unfixed-central-character setting, define the Plancherel measure, and state some of the theorems necessary for the argument.

Throughout, $R$ will be used to denote a rather general locally compact ring.  Usually, $R$ will denote a local field, $\AAA$, $F_S$, or $\AAA^S$ where $S$ is a finite set of places of $F$ and $\AAA$ is its ad\`ele ring.

\begin{defns} \label{defn21} Let $D,\,R$ be as above.  The \emph{Hecke algebra} $\HH(D(R)^1)$ is the convolution algebra of complex-valued, compactly supported smooth functions on $D(R)^1$.

If $\pi$ is an irreducible admissible $D(R)^1$-representation and $\phi \in \HH(D(R)^1)$, then the operator
$$\pi(\phi) = \int_{D(R)^1} \phi(g)\pi(g)\,dg$$
is of trace class on $V_\pi$.  We define $\wh \phi(\pi) = \tr \pi(\phi)$.

In the above definitions, $D(R)^1$ may be replaced with $D(R)$.

Fix $\chi: R^\times \to \CC^\times$.  We define the \emph{fixed central character} Hecke algebra $\HH(D(R),\,\chi)$ as the convolution algebra of functions $\phi: D(R) \to \CC$ such that
\begin{itemize} 
	\item $\phi$ is compactly supported modulo the center of $D(R)$, and
	\item For any $g\in D(R),\, z\in Z(R)$ we have
	$$\phi(gz) = \chi^{-1}(z) \phi(g).$$
\end{itemize}

If $\pi$ is an irreducible admissible $D(R)$-representation with central character $\chi$, and $\phi\in \HH(D(R),\,\chi)$, then the integral
$$\pi(\phi) = \int_{D/Z} \phi(g)\pi(g)\,d\bar g$$
is well-defined and of trace class.  We define $\wh \phi(\pi) = \tr \pi(\phi)$.

Finally, given $\chi$, there is an \emph{averaging map}
$$\HH(D(R)^1) \to \HH(D(R),\,\chi),\,\,\,\,\,\,\, \phi_0 \mapsto \overline{\phi_0}$$
such that for $g\in D(R)^1$
$$\overline{\phi_0}(g) = \int_{Z(R)} \phi_0(gz) \chi(z) \,dz;$$
we extend $\overline \phi_0$ to all of $D(R)$ via the transformation property.

The averaging map $\HH(D(R)) \to \HH(D(R),\,\chi)$ is defined similarly.
\end{defns}

Note that the definitions above depend on choices of Haar measure. We will make the following conventions for the rest of the paper: if $\pp$ is a finite place of $F$ and $K$ a maximal compact subgroup of $D(F_\pp)$, then the Haar measure on $D(F_\pp),\, (D/Z)(F_\pp)$ will be chosen to give $K,\, KZ/Z$ measure $1$. At infinite places, the measures may be chosen to be arbitrary; our only stipulation will be that Haar measures on $Z(F_v),\, D(F_v),\, (D/Z)(F_v)$ are chosen to be compatible under the exact sequence
$$1 \to Z(F_v) \to D(F_v) \to (D/Z)(F_v) \to 1.$$
Haar measures on ad\`{e}le groups are chosen as product measures of these local measures.

\begin{lem} \label{lem22} Let $\phi_0\in \HH(D(R))$ and let $\phi\in \HH(D(R),\,\chi)$ image under the averaging map.  Let $\pi$ be an admissible irreducible $D(R)$-representation with central character $\chi$, and pick compatible Haar measures $dz,\, dg,\, d\bar g$ on $Z(R),\, D(R),\, (D/Z)(R)$ respectively.  Then
$$\wh \phi(\pi) = \wh \phi_0(\pi).$$
\end{lem}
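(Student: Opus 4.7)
The plan is to prove equality at the level of the operators themselves, namely $\pi(\phi_0) = \pi(\phi)$, and then take traces. The key tool is the decomposition of Haar measure along the short exact sequence $1 \to Z(R) \to D(R) \to (D/Z)(R) \to 1$, which is exactly the compatibility we have assumed on our measures.

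Starting from $\pi(\phi_0) = \int_{D(R)} \phi_0(g)\pi(g)\,dg$, I would apply Fubini to rewrite this as an iterated integral
$$\pi(\phi_0) = \int_{(D/Z)(R)} \int_{Z(R)} \phi_0(gz)\pi(gz)\,dz\,d\bar g,$$
where the inner integral makes sense because for each fixed $g$ the set $\{z \in Z(R) : gz \in \supp \phi_0\}$ is a compact subset of $Z(R)$ (it is the intersection of $g^{-1}\supp \phi_0$ with the closed subgroup $Z(R)$). Since $\pi$ has central character $\chi$, we have $\pi(gz) = \chi(z)\pi(g)$, so the inner integral becomes
$$\pi(g)\int_{Z(R)} \phi_0(gz)\chi(z)\,dz = \pi(g)\,\overline{\phi_0}(g) = \pi(g)\phi(g).$$
This immediately gives
$$\pi(\phi_0) = \int_{(D/Z)(R)} \phi(g)\pi(g)\,d\bar g = \pi(\phi),$$
and taking traces yields $\wh\phi_0(\pi) = \wh\phi(\pi)$.

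The only step that needs any care is the use of Fubini and the manipulation of the measures; both are justified by the compact support of $\phi_0$ in $D(R)$, the compact support of $\phi$ modulo $Z(R)$, and the chosen compatibility of Haar measures. No genuine obstacle arises, so the lemma is essentially a bookkeeping check that the averaging map intertwines the two notions of the operator $\pi(\cdot)$ whenever $\pi$ has the prescribed central character.
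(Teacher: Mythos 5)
Your proof is correct and matches the paper's approach exactly: the paper simply states that the result ``is a simple application of Fubini's theorem,'' which is precisely the decomposition you carried out, writing the integral over $D(R)$ as an iterated integral over $(D/Z)(R)$ and $Z(R)$, using the central character to extract $\chi(z)$, and recognizing the inner integral as $\overline{\phi_0}(g)$. You have merely supplied the details the paper left implicit.
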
 
\begin{proof} The proof is a simple application of Fubini's theorem.\end{proof}

In our theorem, the limiting distribution is given by the \emph{Plancherel measure} at a finite set of local places.  We define it here:

\begin{defn} \label{Plancherel} Let $F$ be a number field and let $S$ be a finite set of places.  There is a unique measure $\mupl$ on $D(F_S)^{1,\wedge}$ that is supported on the tempered spectrum $D(F_S)^{1,\wedge,t}$ and such that for any $\phi_S \in \HH(D(F_S))$ we have
$$\int_{D(F_S)^{\wedge}} \wh \phi_S(\pi) \,d\mupl(\pi) = \phi_S(1).$$

Let $\chi: F_S^\times \to \CC^\times$. Then there is a unique measure $\mupl_{\chi}$ on the set $D(F_S)^{\wedge,\chi}$ that is supported on $D(F_S)^{wedge,\chi_S,t}$ and such that for any function $\phi \in \HH(D(F_S),\,\chi)$ we have
$$\int_{D(F_S)^{\wedge,\chi}} \wh \phi_S(\pi)\,d\mupl_{\chi}(\pi) = \phi_S(1).$$
\end{defn}

A self-contained construction of the Plancherel measure in the $\pp$-adic case is given in \cite{Wal03}.  For the real case, references include \cite{Dix77} and \cite{Wal92}.  The existence of a fixed-central-character measure is known to the experts but to our knowledge is not written down fully.  In Proposition 6.2.8 and Subsection 11.2 of \cite{Bin15}, the author constructs such a measure from the non-fixed character case using abelian Fourier analysis in the $\pp$-adic $\GL_2$ case; the same proof goes through in the situation here.

We will need two more group-theoretic theorems.  The first is a density theorem due to Sauvageot.  We'll start with a definition:
\begin{defn} \label{defn24} Let $\mathscr{F}(D(F_S)^1)$ be the space of bounded, complex-valued functions $\wh f$ on $D(F_S)^{\wedge}$ such that $\wh f$ is supported on finitely many Bernstein components, and such that $\wh f$ is continuous outside a set of Plancherel measure zero.  If $\chi: F_S^\times \to \CC^\times$ is a character, define $\mathscr{F}(D(F_S),\chi)$ as the space of such functions $\wh f$ on $D(F_S)^{\wedge,\chi}$.
\end{defn}

As such, the maps $\phi \mapsto\wh \phi$ defined in \ref{defn21} give maps $\HH(D(F_S)^1) \to \mathscr{F}(D(F_S))$ and $\HH(D(F_S),\,\chi) \to \mathscr{F}(D(F_S),\,\chi)$.  The content of the density theorem is that the images are dense in an appropriate sense:

\begin{thm}[Sauvageot's Density Theorem] \label{Sauvageot} Let $\wh f\in \mathscr{F}(D(F_S)^1)$ and fix $\epsilon > 0$.  Then there are functions $\phi,\,\psi \in \HH(D(F_S)^1)$ such that
\begin{itemize} 
	\item $|\wh f(\pi) - \wh \phi(\pi)| \leq \wh \psi(\pi)$ for all $\pi \in D(F_S)^{\wedge}$, and
	\item $\mupl(\wh \psi) < \epsilon$.
\end{itemize}

The analogous theorem holds for $\phi,\,\psi \in \HH(D(F_S), \,\chi)$ if we restrict $\wh f$ to the space $D(F_S)^{\wedge,t,\chi}$.
\end{thm}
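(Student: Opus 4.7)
The plan is to follow Sauvageot's original component-by-component strategy on the Bernstein decomposition of $D(F_S)^{1,\wedge}$. Since $\wh f$ is supported on finitely many Bernstein components, by linearity I reduce to the case of a single component $\Omega$. The tempered part $\Omega^t$ carries the structure of a compact orbifold $T_\Omega/W_\Omega$, where $T_\Omega$ parametrizes unitary unramified twists of a fixed supercuspidal representation of a Levi subgroup and $W_\Omega$ is a finite stabilizer; the restriction of $\mupl$ to $\Omega^t$ is absolutely continuous with respect to a smooth measure on this orbifold.

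The key density input is that $\phi\mapsto \wh\phi$ sends $\HH(D(F_S)^1)$ to a uniformly dense subspace of $C(\Omega^t)$. This is Stone--Weierstrass: the image is a self-adjoint subalgebra of $C(\Omega^t)$ (closure under complex conjugation comes from the involution $\phi(g)\mapsto \overline{\phi(g^{-1})}$, which sends trace to its complex conjugate on unitary representations) and it separates points of the orbifold by classical character theory for tempered representations.

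Given this, the construction is as follows. Since $\wh f$ is bounded and continuous off a Plancherel-null set $N\subseteq\Omega^t$, I use regularity of $\mupl$ to pick an open $U\supseteq N$ with $\mupl(U)<\delta$, then apply Lusin's theorem together with Tietze extension to produce a continuous $\wh g$ on $\Omega^t$ agreeing with $\wh f$ outside $U$ and satisfying $\|\wh g\|_\infty\leq \|\wh f\|_\infty$. The density step gives $\phi\in\HH(D(F_S)^1)$ with $\wh\phi$ uniformly $\delta$-close to $\wh g$ on $\Omega^t$. For the dominating $\wh\psi$ the decisive trick is positivity: writing $\psi=\psi_0^\ast\ast\psi_0$ forces $\wh\psi\geq 0$ on the entire unitary dual, and I select $\psi_0$ by the same density argument so that $\wh\psi$ approximates from above the continuous majorant $2\|\wh f\|_\infty\cdot\one_U+\delta\cdot\one_{\Omega^t}$, yielding $\wh\psi\geq |\wh f-\wh\phi|$ on $\Omega^t$ with $\mupl(\wh\psi)=O(\delta(1+\|\wh f\|_\infty^2))$; rescaling $\delta$ then produces $\mupl(\wh\psi)<\epsilon$.

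The main obstacle is controlling $\wh\phi$ and $\wh\psi$ on the \emph{non-tempered} part of $\Omega$, where $\wh f$ vanishes but trace functions on $\HH$ need not, and which the Plancherel measure ignores. I would handle this via the Langlands classification, realizing each non-tempered constituent of $\Omega$ as the Langlands quotient of a standard module parabolically induced from tempered data on a Levi; $\wh\phi(\pi)$ at a non-tempered $\pi$ is then expressible in terms of values of $\wh\phi$ at nearby, possibly complex, tempered parameters via Paley--Wiener analyticity, so carrying out the density step on a sufficiently small complex neighborhood of $\Omega^t$ automatically controls behavior on non-tempered points, and the positivity of $\wh\psi$ extends the domination there. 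For the fixed-central-character analog, apply the averaging map of Definitions \ref{defn21} to $\phi,\psi$: by Lemma \ref{lem22} we have $\wh{\overline{\phi}}(\pi)=\wh\phi(\pi)$ and $\wh{\overline{\psi}}(\pi)=\wh\psi(\pi)$ for $\pi$ with central character $\chi$, so the pointwise bound transports to $D(F_S)^{\wedge,t,\chi}$; the Plancherel bound transports to $\mupl_\chi$ via the disintegration of $\mupl$ along central characters, exactly as in Proposition 6.2.8 of \cite{Bin15}.
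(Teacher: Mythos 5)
The paper does not actually prove this statement: its ``proof'' is a two-line citation, deferring the first part to Theorem~7.3 of Sauvageot \cite{Sau97} and the fixed-central-character version to Lemma~11.2.7 of \cite{Bin15}. You have instead sketched Sauvageot's argument from scratch, which is a reasonable thing to want to do, but the sketch has a real gap at exactly the point where the theorem's content lives.

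The problem is the Stone--Weierstrass step. You assert that the image of $\phi\mapsto\wh\phi$, restricted to the tempered part $\Omega^t$ of a single Bernstein component, is a \emph{subalgebra} of $C(\Omega^t)$, and you justify only closure under complex conjugation (via $\phi^\ast(g)=\overline{\phi(g^{-1})}$, which is fine). But $\phi\mapsto\wh\phi$ is not multiplicative for convolution: $\tr\pi(\phi_1\ast\phi_2)=\tr\bigl(\pi(\phi_1)\pi(\phi_2)\bigr)$, which is not $\tr\pi(\phi_1)\cdot\tr\pi(\phi_2)$ when $\pi$ is infinite-dimensional. So closure of the image under \emph{pointwise} products is not free; it is essentially equivalent to knowing that the image on $\Omega^t$ is exactly the algebra of $W_\Omega$-invariant finite Laurent polynomials on the compact torus $T_\Omega$, which is the trace Paley--Wiener theorem of Bernstein--Deligne--Kazhdan (in the nonarchimedean case) and Clozel--Delorme (in the archimedean case). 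Once you invoke that, Stone--Weierstrass is superfluous --- Laurent polynomials are dense for elementary reasons --- so the argument as written has the key input hidden and unjustified. The rest of the outline (Lusin/Tietze regularization of $\wh f$, the positivity trick $\psi=\psi_0^\ast\ast\psi_0$ to get $\wh\psi\geq 0$ on the entire unitary dual, and transferring to $\HH(D(F_S),\chi)$ by the averaging map) does track Sauvageot's scheme, though the passage to the non-tempered part of $\Omega$ is the technically hardest part of \cite{Sau97} and your appeal to ``analyticity on a complex neighborhood'' is more of a gesture than an argument; that step would need to be made precise before this could stand as a proof.
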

\begin{proof} The first statement is Thm. 7.3 of \cite{Sau97}.  The fixed-central-character statement follows by the same logic as in Lemma 11.2.7 of \cite{Bin15}.\end{proof}

\begin{rem} \label{rem26} Sauvageot also shows that if $A$ is a bounded subset of $D(F_S)^{1,\wedge}$ that does not intersect the tempered spectrum, then given $\epsilon$ there is a $\phi\in \HH(D(F_S))$ such that $\wh \phi \geq 0$ everywhere, $\wh \phi \geq 1$ on $A$, and $\mupl(\wh \phi) < \epsilon$.  From this, we discern that if $\wh f \in \FF(D(F_S)^1)$ then so is $\wh f \cdot \one_t$ and $\wh f \cdot (\one - \one_t)$, where $\one_t$ is the characteristic function of the tempered spectrum.  This will be useful later for using Sauvageot's theorem to isolate automorphic representations that are tempered at $S$.\end{rem}

Finally, we'll need the trace formula.  Since $D$ has no proper parabolic subgroups, we may use the Selberg trace formula for compact quotient.

\begin{defn} \label{defn27} Let $\gamma \in D(\AAA)$ and let $\phi\in \HH(D(\AAA)^1)$ or $\HH(D(\AAA),\chi)$.  The \emph{orbital integral} of $\phi$ with respect to $\gamma$ is the quantity
$$O_{\gamma}(\phi) = \int_{D_{\gamma}(\AAA) \bs D(\AAA)} \phi(g^{-1}\gamma g)\, d\bar g.$$
\end{defn}

\begin{thm}[The Selberg Trace Formula] \label{Trace} Let $F$ be a number field with ad\`{e}le ring $\AAA$ and let $\phi \in \HH(D(\AAA)^1)$.  Then we have an equality
\begin{align*} \sum_{\pi} m_\pi \tr \pi(\phi) = & \sum_{z\in Z(F)} \vol(D(F)\bs D(\AAA)^1) \phi(z)
\\ & + \sum_{\gamma \in (D(F) - Z(F))/\sim} \vol(D_{\gamma}(F) \bs D_{\gamma}(\AAA)^1) O_{\gamma}(\phi).
\end{align*}
where the left-hand sum runs over all automorphic representations of $D(\AAA)$, and the right-hand side runs over conjugacy classes in $D(F)$.

If $\chi: \AAA^\times \to \CC^\times$ is an automorphic character and $\phi\in\HH(D(\AAA),\,\chi)$, then 
\begin{align*} \sum_{\chi_\pi = \chi} m_\pi \tr \pi(\phi) = & \vol(D(F)Z(\AAA)\bs D(\AAA)) \phi(1) 
\\ & + \sum_{\gamma \in (D(F) - Z(F))/\sim} \vol(D_{\gamma}(F)Z(\AAA) \bs D_{\gamma}(\AAA)) O_{\gamma}(\phi).
\end{align*}
Here the left-hand side runs over automorphic representations whose central character is $\chi$, and the right-hand side runs over equivalence classes of elements in $D(F)$, where $\gamma$ and $\gamma'$ are equivalent if $\gamma'$ is conjugate to $z\gamma$ for $z\in Z(F)$.
\end{thm}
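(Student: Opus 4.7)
The plan is to invoke the Selberg trace formula for compact quotient; the essential input is that $D$ is anisotropic, so $D(F)\bs D(\AAA)^1$ is compact and $L^2(D(F)\bs D(\AAA)^1)$ decomposes discretely as $\bigoplus_\pi m_\pi \pi$ with each $m_\pi < \infty$. For $\phi \in \HH(D(\AAA)^1)$, the right convolution operator $R(\phi)$ is an integral operator with kernel
$$K_\phi(x,y) = \sum_{\gamma\in D(F)} \phi(x^{-1}\gamma y),$$
locally finite by discreteness of $D(F)$ and compact support of $\phi$. Since $K_\phi$ is continuous on the compact quotient, $R(\phi)$ is of trace class with $\tr R(\phi) = \int_{D(F)\bs D(\AAA)^1} K_\phi(x,x)\,dx$.

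I would then compute this trace two ways. Spectrally, decomposing $R$ into irreducibles gives $\sum_\pi m_\pi \tr\pi(\phi)$. Geometrically, I would reorganize the inner sum in $K_\phi(x,x)$ by $D(F)$-conjugacy classes and use the bijection $D_\gamma(F)\bs D(F) \iso [\gamma]$ to unfold the integral into $\sum_{[\gamma]} \vol(D_\gamma(F)\bs D_\gamma(\AAA)^1)\, O_\gamma(\phi)$. Pulling out the central elements $z \in Z(F)$, for which $D_z = D$ and the orbital integral degenerates to $\phi(z)$, recovers the first displayed identity.

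For the fixed-central-character variant, I would repeat the argument on the $\chi$-isotypic space $L^2(D(F)Z(\AAA)\bs D(\AAA), \chi)$ of sections transforming by $f(\gamma z g) = \chi(z)f(g)$. The quotient $D(F)Z(\AAA)\bs D(\AAA)$ is again compact, and for $\phi \in \HH(D(\AAA),\chi)$ the convolution operator has kernel $\sum_{\gamma \in Z(F)\bs D(F)} \phi(x^{-1}\gamma y)$; the same two-way trace calculation, now with volumes taken modulo $Z(\AAA)$ and conjugacy classes replaced by $Z(F)$-twisted conjugacy classes, produces the second formula. The central contribution collapses to the single term $\vol(D(F)Z(\AAA)\bs D(\AAA))\phi(1)$ because the sum over $Z(F)$ in the unfixed version is absorbed by the transformation rule $\phi(zg) = \chi^{-1}(z)\phi(g)$.

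The main obstacle in a general trace-formula setting would be the convergence and trace-class properties of $R(\phi)$; here, anisotropy of $D$ renders these immediate, so I expect the proof of Theorem \ref{Trace} to amount to citing the classical compact-quotient Selberg formula and noting that the fixed-character version reduces to it via the standard averaging construction of Definition \ref{defn21}.
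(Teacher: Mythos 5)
The paper states Theorem~\ref{Trace} without proof, treating it as a citation of the classical Selberg trace formula for compact quotient (the preamble explicitly notes that $D$ having no proper parabolics renders the quotient compact). Your proposal correctly reconstructs the standard argument behind that citation: the kernel $K_\phi(x,y)=\sum_{\gamma\in D(F)}\phi(x^{-1}\gamma y)$ is continuous on a compact quotient, hence $R(\phi)$ is trace class, and the two ways of computing $\int K_\phi(x,x)\,dx$ give the spectral and geometric sides; the fixed-central-character version follows by the same computation on the $\chi$-isotypic space with the kernel indexed by $Z(F)\bs D(F)$. This matches what the paper implicitly relies on, and your observation that the central terms collapse to a single $\phi(1)$ term in the fixed-character case (via the transformation law $\phi(zg)=\chi^{-1}(z)\phi(g)$) is exactly right. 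The only standard subtlety you elide — the compatibility of Haar measures in the factorization $D_\gamma(F)\bs D(\AAA)^1 \cong (D_\gamma(F)\bs D_\gamma(\AAA)^1)\times(D_\gamma(\AAA)^1\bs D(\AAA)^1)$ and the relation between $D_\gamma(\AAA)^1$ and $D_\gamma(\AAA)\cap D(\AAA)^1$ — is equally glossed over in the paper's statement of Definition~\ref{defn27}, so this is not a gap relative to what is being asked.
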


\section{The `new vector' test function}
\label{sec:3}

Let $F,\, D,\, S$ be as in the previous section and let $\nn$ be an ideal of $\oo_F$, coprime to $S$.  Throughout, $\chi$ will denote a character $\AAA^{S,\times} \to \CC^\times$ whose conductor $\ff$ divides $\nn$.  In this section, we will construct explicit test functions $e^{\new}_{n,\nn} \in \HH(\GL_n(\AAA^S))$ and $e^{\new}_{n,\nn,\chi}\in \HH(\GL_n(\AAA^S),\,\chi)$ such that, for any \emph{generic} representation $\GL_n(\AAA^S)$-representation $\pi^S = \bigotimes_{\pp\not\in S} \pi_\pp$, we have
$$\wh e^{\new}_{n,\nn}(\pi^S) = \tr \pi^S(e^{\new}_{n,\nn}) = 
	\begin{cases}
	1 & c(\pi^S) = \nn \\
	0 & c(\pi^S) \neq \nn.
	\end{cases}$$
and similarly for $e^{\new}_{n,\nn,\chi}$ when $\pi^S$ has central character $\chi$.
	
We'll construct $e^{\new}$ as a product of local test function $e^{\new}_{n,\pp^r}$ for $\pp^r \mid \nn$; we'll later construct $e^{\new}_{n,\nn,\chi}$ as the image of $e^{\new}_{n,\nn}$ under the averaging map $\HH(\GL_n(\AAA^S)) \to \HH(\GL_n(\AAA^S),\chi)$.  We'll need two inputs: a theorem of Reeder and a combinatorial identity.  Recall the definition of $K_n(\pp^r)$ from the introduction.

\begin{thm}[\cite{Ree91}, Theorem 1] \label{Reeder} Let $\pi_\pp$ be a generic irreducible admissible representation of $\GL_n(F_{\pp})$ of conductor $c = c(\pi_p)$.  Then
$$\dim \pi_p^{K_n(\pp^r)} = \binom{r - c + n - 1}{n - 1}$$
\end{thm}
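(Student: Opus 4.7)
The plan is to establish the formula via the Whittaker model of $\pi_\pp$, building on the Jacquet--Piatetski-Shapiro--Shalika conductor theorem. By JPSS, $\dim \pi_\pp^{K_n(\pp^c)} = 1$, spanned by the essential vector $v_0$, whose Whittaker function $W_0$ is determined by the local $L$-factor of $\pi_\pp$ via its prescribed values on the diagonal torus. Since $K_n(\pp^r) \subseteq K_n(\pp^c)$ for $r \geq c$, we already have $\pi_\pp^{K_n(\pp^c)} \subseteq \pi_\pp^{K_n(\pp^r)}$, so the question is to enumerate the remaining old vectors.

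I would split the argument into a lower bound and a matching upper bound. For the lower bound, I would construct $\binom{r-c+n-1}{n-1}$ explicit elements of $\pi_\pp^{K_n(\pp^r)}$ indexed by the multisets $0 \leq b_1 \leq \cdots \leq b_{n-1} \leq r - c$ (equivalently, weak compositions of $r - c$ into $n$ non-negative parts). The natural candidates are translates $v_{\mathbf{b}} := \pi_\pp(\eta_{\mathbf{b}}) v_0$ by carefully chosen diagonal elements $\eta_{\mathbf{b}} = \diag(\varpi^{-b_1},\ldots,\varpi^{-b_{n-1}},1)$, possibly modified by Hecke averaging to restore full $K_n(\pp^r)$-invariance; the verification is a direct entry-by-entry congruence check on $\eta_{\mathbf{b}}^{-1} K_n(\pp^r) \eta_{\mathbf{b}} \subseteq \Stab(v_0)$. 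Linear independence follows from the fact that the Whittaker functions $W_{v_{\mathbf{b}}}(g) = W_0(g\eta_{\mathbf{b}})$ have pairwise distinct restrictions to the diagonal torus, exploiting the support properties of $W_0$ guaranteed by JPSS.

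For the upper bound---the harder half---I would pass to the Kirillov model. By Bernstein--Zelevinsky, the restriction of $\pi_\pp$ to the mirabolic subgroup $P_n \cong \GL_{n-1}(F_\pp) \ltimes F_\pp^{n-1}$ is isomorphic to $\cInd_{U_n}^{P_n} \psi$ for any nontrivial character $\psi$ of $U_n$. Under this identification, elements of $\pi_\pp^{K_n(\pp^r)}$ correspond to compactly supported $\psi$-equivariant functions on $P_n$ that are right-invariant under $K_n(\pp^r) \cap P_n$. A Mackey-type double-coset decomposition of $P_n$ into $(U_n, K_n(\pp^r) \cap P_n)$-double cosets, combined with induction on $n$ using Casselman's classical treatment of $\GL_2$ as the base case, reduces the count to a combinatorial problem on lattices and yields the claimed binomial coefficient.

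The main obstacle is the upper bound. The lower bound is essentially mechanical once the parametrization is fixed, but matching the exact dimension $\binom{r-c+n-1}{n-1}$ requires tight control on the support of arbitrary $K_n(\pp^r)$-invariant Whittaker functions---in particular, ruling out stray invariant vectors beyond the $v_{\mathbf{b}}$. The finest input here is the precise description of $W_0$ on the torus provided by JPSS; without it, one obtains only inequalities rather than the sharp equality Reeder's formula asserts.
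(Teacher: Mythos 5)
The paper does not prove this statement at all---it cites it directly as Theorem 1 of Reeder's \emph{Old forms on $\GL_n$}---so there is no internal proof to compare yours against. Assessing your sketch on its own terms: the architecture (lower bound by explicit old vectors, upper bound via the Bernstein--Zelevinsky/Kirillov model) is reasonable, but the two places you flag as ``mechanical'' or defer to a combinatorial miracle are precisely where the theorem lives, and one of them contains an outright error.

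For the lower bound, the translates $\pi_\pp(\eta_{\mathbf b})v_0$ with $\eta_{\mathbf b}=\diag(\varpi^{-b_1},\ldots,\varpi^{-b_{n-1}},1)$ are \emph{not} $K_n(\pp^r)$-invariant for $n>2$. Conjugation by $\eta_{\mathbf b}$ scales the $(i,j)$ entry of the top-left $(n-1)\times(n-1)$ block by $\varpi^{b_i-b_j}$, and since $K_n(\pp^r)$ imposes no congruence condition on that block beyond membership in $\GL_{n-1}(\oo_\pp)$, the conjugate leaves $\mathbf K_{n-1}$ whenever the $b_i$ are not all equal. So the ``possibly modified by Hecke averaging'' is not optional fine print---you must project onto the $K_n(\pp^r)$-fixed space---and once you do, the linear-independence argument via distinct torus restrictions of $W_0$ no longer goes through verbatim, because the averaging can collapse the translates. (For $n=2$ the block structure degenerates and the translates really are invariant, which is why the classical $\GL_2$ computation of Casselman looks clean and why the difficulty is easy to miss.)

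For the upper bound, ``reduces the count to a combinatorial problem on lattices and yields the claimed binomial coefficient'' is the entire content of the theorem. The double-coset space $U_n\backslash P_n/(K_n(\pp^r)\cap P_n)$ is infinite, and the issue is not counting its elements but determining which $\psi$-equivariant functions supported near each coset both satisfy the $K_n(\pp^r)\cap P_n$-invariance \emph{and} extend to a $K_n(\pp^r)$-invariant vector for the full group $\GL_n(F_\pp)$---the Kirillov model only sees the mirabolic. Controlling that extension (for instance via the action of the Hecke operator attached to $\diag(1,\ldots,1,\varpi)$ and the functional equation) is where the genericity hypothesis is used and where the sharp equality comes from. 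Your sketch correctly identifies the upper bound as the harder half but does not supply the mechanism that makes it close.
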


It's worth remarking that the genercity condition is necessary. For example, if $\pi$ is the trivial representation then $\dim \pi^{K_n(\pp^r)} = 1$ for all $r$.

\begin{prop} \label{prop32} For any $n\in \ZZ_{\geq 1}$ and $k \in \ZZ$, the following identity holds:
$$\sum_{i = 0}^n (-1)^{i} \binom{n}{i} \binom{k - i + n - 1}{n - 1} = 
	\begin{cases} 1 & \text{if }k = 0
	\\ 0 & \text{otherwise}. \end{cases}$$
\end{prop}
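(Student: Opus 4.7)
The plan is to prove this by the formal power series identity $(1-x)^n \cdot (1-x)^{-n} = 1$ in $\ZZ[[x]]$. Expanding, the binomial theorem gives $(1-x)^n = \sum_{i=0}^n (-1)^i \binom{n}{i} x^i$, while the negative binomial series gives $(1-x)^{-n} = \sum_{j \geq 0} \binom{j+n-1}{n-1} x^j$. Multiplying the two series and reading off the coefficient of $x^k$ yields $1$ when $k=0$ and $0$ when $k \geq 1$, so the proposition reduces to the observation that $(1-x)^n$ and $(1-x)^{-n}$ are formal inverses.

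What remains is to identify this coefficient with the displayed sum. For $k \geq 0$, Cauchy multiplication of the two series yields
$$[x^k]\bigl((1-x)^n (1-x)^{-n}\bigr) = \sum_{i=0}^{\min(n,k)} (-1)^i \binom{n}{i}\binom{k-i+n-1}{n-1},$$
since the index $j = k-i$ must be non-negative. To extend the summation to $0 \le i \le n$ without changing its value, I would note that for $k < i \le n$ one has $k - i + n - 1 < n - 1$, so $\binom{k-i+n-1}{n-1}$ vanishes under the standard convention $\binom{a}{b} = 0$ for integers $a < b$ (including $a < 0$). For $k < 0$, every index in the sum satisfies $k - i < 0$, so every summand vanishes and the total is $0$, again matching the right-hand side.

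The main (quite minor) obstacle is simply the bookkeeping around binomial coefficient conventions: one needs to verify that the ``extra'' summands beyond the natural convolution range $0 \le i \le k$ are indeed zero, so that the identity can be stated cleanly with the summation index running all the way up to $i = n$. No genuine combinatorial difficulty arises once the generating-function framework is set up.
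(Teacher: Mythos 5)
Your proof is correct, but it follows a different route from the paper's. The paper argues with derivatives: for $k > 0$ it sets $g_{k,n}(x) = (x-1)^n x^{k-1}$, observes that $(x-1)^n$ vanishes to order $n$ at $x = 1$ so that $g_{k,n}^{(n-1)}(1) = 0$, and then expands $g_{k,n}$ by the binomial theorem and differentiates term-by-term to identify $g_{k,n}^{(n-1)}(1)/(n-1)!$ with exactly the displayed sum (the cases $k = 0$ and $k < 0$ are handled by inspection). You instead read the sum off as the coefficient of $x^k$ in the formal power series identity $(1-x)^n \cdot (1-x)^{-n} = 1$, combining the binomial theorem with the negative binomial series and then using Cauchy multiplication. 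Both arguments rest on the same convention $\binom{a}{n-1} = 0$ for integers $a < n-1$, including $a < 0$ (the paper flags this explicitly in a parenthetical), and both are short; your generating-function route is arguably more conceptual, since the identity becomes the transparent statement that $(1-x)^n$ and $(1-x)^{-n}$ are formal inverses in $\ZZ[[x]]$, while the paper's argument is more elementary in that it uses nothing beyond polynomial differentiation and needs no appeal to the negative binomial expansion. Your bookkeeping --- extending the natural convolution range $0 \le i \le \min(n,k)$ to $0 \le i \le n$ and checking that every summand vanishes when $k < 0$ --- is handled correctly and completely.
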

\begin{proof} If $k = 0$ then the only nonzero term of the right-hand side is the $i = 0$ term, which is $1$.  If $k < 0$ then all terms of the sum are zero.

If $k > 0$, consider the polynomial function $g_{k,n}(x) = (x-1)^n x^{k-1}$.  This polynomial vanishes with order $n$ at $x = 1$, so $g^{(n-1)}_{k,n}(1) = 0$.  On the other hand, we may expand $g_{k,n}$ as 
$$g_{k,n}(x) = \sum_{i = 0}^n (-1)^i\binom{n}{i} x^{k - 1 + n - i}$$
so that
$$g^{(n-1)}_{k,n}(x) = (n-1)! \sum_{i = 0}^{n}  (-1)^i \binom{n}{i} \binom{k-1+n-i}{n-1} x^{k - 1 + n - i}$$
(note that if $k - 1 + n - i < 0$ then $\binom{k-1+n-i}{n-1} = 0$) and therefore 
$$0 = g^{(n-1)}_{k,n}(1) = (n-1)! \sum_{i = 0}^n (-1)^i \binom{n}{i} \binom{k-1+n-i}{n-1}$$
completing the proof.
\end{proof}

This motivates the following definition:
\begin{defn} \label{defn33} Given a prime $\pp$ and a conductor $r$, let 
$$e^{\new}_{n,\pp^r} = \sum_{i = 0}^n (-1)^i \binom{n}{i} e_{K_n(\pp^{r-i})} \in \HH(\GL_n(F_{\pp}))$$
where $e_{K_n(\pp^{r-i})}\in \HH(\GL_n(F_\pp))$ is the idempotent function corresponding to the open compact subgroup $K_n(\pp^{r-i})$ of $\GL_n(F_{\pp})$.  (By abusing notation, if $r- i < 0$, we set $e_{K_n(\pp^{r-i})} = 0$).

If $\nn = \prod_{\pp} \pp^r_{\pp}$, define
$$e^{\new}_{n,\nn} = \left(\prod_{\pp\mid \nn} e^{\new}_{n, \pp^{r_\pp}}\right) \times \left(\prod_{\pp\nmid \nn} \one_{\mathbf{K}_p}\right).$$
\end{defn}

\begin{prop} \label{prop34} \begin{enumerate}[(1)]
	\item Let $\pi_\pp$ be a generic representation of $\GL_n(F_\pp)$.  Then
		$$\tr \pi_{\pp}(e^{\new}_{\pp^r}) = 
			\begin{cases} 1 & c(\pi_{\pp}) = r
			\\ 0 & \text{otherwise}. \end{cases}$$
	\item Let $\pi^S$ be a generic automorphic representation of $\GL_n(\AAA^S)$ and let $\nn$ be an ideal coprime to $S$.  Then
	$$\tr \pi^S(e^{\new}_{\nn}) = 
		\begin{cases} 1 & c(\pi^S) = \nn
		\\ 0 & \text{otherwise}.\end{cases}$$
\end{enumerate}
\end{prop}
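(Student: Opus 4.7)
The plan is to deduce Part (1) directly from Reeder's theorem (Theorem \ref{Reeder}) and the combinatorial identity of Proposition \ref{prop32}, and then obtain Part (2) by factoring $e^{\new}_\nn$ over the places outside $S$.

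For Part (1), I start from the observation that, under the normalization $\vol(\mathbf{K}_\pp) = 1$ and the usual convention $e_K = \vol(K)^{-1}\one_K$ for idempotents, $\pi_\pp(e_{K_n(\pp^{r-i})})$ is the orthogonal projector onto the $K_n(\pp^{r-i})$-fixed subspace, so
$$\tr\pi_\pp(e^{\new}_{n,\pp^r}) \;=\; \sum_{i=0}^n (-1)^i\binom{n}{i}\dim\pi_\pp^{K_n(\pp^{r-i})}.$$
Setting $c = c(\pi_\pp)$ and $k = r - c$, Reeder's formula gives $\dim \pi_\pp^{K_n(\pp^{r-i})} = \binom{k - i + n - 1}{n-1}$ whenever $r - i \geq c$, and $0$ otherwise. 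Before applying Proposition \ref{prop32} directly, I need to check that the ``missing'' indices do not affect the answer: for $k \leq 0$ every dimension vanishes, while for $k > 0$ and $i > k$ one has $0 \leq k - i + n - 1 \leq n - 2$, which forces $\binom{k-i+n-1}{n-1} = 0$ anyway. Hence the sum equals $\sum_{i=0}^n (-1)^i\binom{n}{i}\binom{k - i + n - 1}{n - 1}$, which by Proposition \ref{prop32} is $1$ when $k = 0$ and $0$ otherwise, exactly the claim.

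For Part (2), I use the factorization $e^{\new}_\nn = \prod_{\pp \not\in S} e^{\new}_{n,\pp^{r_\pp}}$ from Definition \ref{defn33} (with $r_\pp = v_\pp(\nn)$, so that $e^{\new}_{n,\pp^0}$ reduces to $\one_{\mathbf{K}_\pp}$ for $\pp \nmid \nn$). Since $\pi^S = \bigotimes_{\pp \not\in S} \pi_\pp$ and genericity passes to each local factor, the trace splits as $\prod_{\pp \not\in S} \tr \pi_\pp(e^{\new}_{n,\pp^{r_\pp}})$; applying Part (1) at every place, including the case $r = 0$, each factor becomes the indicator of $c(\pi_\pp) = r_\pp$, and the product is $1$ precisely when $c(\pi^S) = \nn$. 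No step here is a real obstacle: Reeder's theorem and the combinatorial identity do all the substantive work, with the only delicate point being the bookkeeping convention for $\dim \pi_\pp^{K_n(\pp^{r-i})}$ in the degenerate range $r - i < c$.
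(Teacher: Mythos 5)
Your argument is the paper's proof spelled out in detail: the paper likewise obtains Part (1) by combining Reeder's theorem, the combinatorial identity of Proposition \ref{prop32}, and the fact that $\tr\pi_\pp(e_K) = \dim\pi_\pp^K$, and derives Part (2) by factoring over places away from $S$. One small phrasing slip in your bookkeeping: you write ``for $k \leq 0$ every dimension vanishes,'' which is false at $k = 0$ where the $i = 0$ term contributes $\dim\pi_\pp^{K_n(\pp^c)} = 1$; you should say $k < 0$, noting that for $k = 0$ that nonvanishing $i = 0$ term is exactly what produces the answer $1$, while the remaining indices $i > 0$ vanish on both sides.
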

\begin{proof} The first statement follows from Reeder's theorem, our combinatorial identity \ref{prop32}, and the fact that if $K \leq \GL_n(F_{\pp})$ is an open compact subgroup, then $\tr \pi_{\pp} (e_{K}) = \dim \pi_{\pp}^K$.  The second statement follows directly from the first.
\end{proof}

If $\chi^S: (\AAA^S)^\times \to \CC^\times$ is a character of conductor $\ff^S$ and $\ff^S \mid \nn$ we define
$e^{\new}_{n,\nn,\,\chi}$ to be the image of $e^{\new}_{n,\nn}$ under the averaging map $\HH(\GL_n(\AAA^S)) \to \HH(\GL_n(\AAA^S),\,\chi)$.  The following corollary follows immediately from Proposition \ref{prop34} and Lemma \ref{lem22}.

\begin{cor} \label{cor34} Let $\pi^S$ be a generic automorphic representation of $\GL_n(\AAA^S)$ with central character $\chi^S$ and let $\nn$ be an ideal coprime to $S$ and divisible by $\ff$. 
Then
	$$\tr \pi^S(e^{\new}_{\nn,\chi}) = 
		\begin{cases} 1 & c(\pi^S) = \nn
		\\ 0 & \text{otherwise}.\end{cases}$$
\end{cor}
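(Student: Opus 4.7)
The plan is to reduce this statement directly to the unfixed-central-character version in Proposition \ref{prop34}(2) via the averaging compatibility from Lemma \ref{lem22}. Since $e^{\new}_{n,\nn,\chi}$ is \emph{defined} as the image of $e^{\new}_{n,\nn}$ under the averaging map $\HH(\GL_n(\AAA^S)) \to \HH(\GL_n(\AAA^S),\,\chi)$, and $\pi^S$ is assumed to have central character $\chi^S$, Lemma \ref{lem22} applies verbatim with $R = \AAA^S$, $\phi_0 = e^{\new}_{n,\nn}$, and $\phi = e^{\new}_{n,\nn,\chi}$, yielding the equality
$$\tr \pi^S(e^{\new}_{n,\nn,\chi}) = \tr \pi^S(e^{\new}_{n,\nn}).$$

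First I would verify that the hypotheses of Lemma \ref{lem22} genuinely hold in this adelic, restricted-product setting: $e^{\new}_{n,\nn}$ is compactly supported, and $\pi^S$ is an irreducible admissible $\GL_n(\AAA^S)$-representation with central character $\chi^S$. Strictly speaking, Lemma \ref{lem22} is phrased for a single locally compact ring $R$, but because $e^{\new}_{n,\nn}$ is a pure tensor (its definition in \ref{defn33} is a product over primes of local factors, with the unramified factor $\one_{\mathbf{K}_\pp}$ away from $\nn$) and the averaging map respects the tensor decomposition, the identity follows place-by-place from the local statement and then multiplies together.

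Once this identity is in hand, I would simply invoke Proposition \ref{prop34}(2), which gives
$$\tr \pi^S(e^{\new}_{n,\nn}) = \begin{cases} 1 & c(\pi^S) = \nn \\ 0 & \text{otherwise}, \end{cases}$$
completing the proof. The only subtlety worth flagging is the hypothesis $\ff^S \mid \nn$: this is needed so that $e^{\new}_{n,\nn,\chi}$ is not identically zero under the averaging map, since averaging against $\chi$ on subgroups whose intersection with the center is incompatible with $\chi$ would kill the function. There is no real obstacle here beyond unwinding the two definitions; the substantive content was already carried by Reeder's theorem and the combinatorial identity in Proposition \ref{prop32}, which fed into Proposition \ref{prop34}.
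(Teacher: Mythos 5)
Your proof is correct and follows the same route the paper takes: the paper states that the corollary follows immediately from Proposition~\ref{prop34} together with Lemma~\ref{lem22}, and your argument is exactly this combination, with some additional (reasonable) remarks about the factorization over places and the role of the hypothesis $\ff^S \mid \nn$.
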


\section{Asymptotic vanishing of orbital integrals} 
\label{sec:4}

Let $h_{n,\nn}(g) = e^{\new}_{n,\nn}(g)/e^{\new}_{n,\nn}(1)$; define $h_{n,\nn,\chi}$ similarly.  In the next section, we will prove Theorems \ref{Thm1} and \ref{Thm2} by plugging a test function of the form
$$h \otimes \phi_S$$
into the trace formula.  To make the argument run, we will need to prove the asymptotic vanishing of the orbital integrals
$$O_{\gamma^S}(h_{n,\nn})$$
(see \ref{defn27}) for noncentral elements $\gamma\in D(F)$.  We'll prove the analogous result for $h_{n,\nn,\chi}$ in the following subsection.

In particular, the goal of this section is to prove the following:
\begin{prop} \label{prop41} Let $F,\,D,\,S$ be as above.  There are constants $C(\gamma),\,\epsilon > 0$ such that, for every non-central element $\gamma \in D(F)$ and any ideal $\nn$ coprime to $S$, we have: 
$$|O_{\gamma^S}(h_{n,\nn})| \leq C(\gamma)(3(n+1))^{P(\nn)} N(\nn)^{-\epsilon}.$$

In particular, for fixed $\gamma$, we have $O_{\gamma^S}(h_{n,\nn}) \to 0$ as $N(\nn) \to \infty$.
\end{prop}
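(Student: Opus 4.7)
The plan is to factor the orbital integral of $h_{n,\nn}$ over primes, apply the Finis--Lapid orbital integral bounds locally at each prime dividing $\nn$, and combine these with a matching lower bound for the normalization $e^{\new}_{n,\nn}(1)$. Since $e^{\new}_{n,\nn}$ is a tensor product of local test functions, the orbital integral splits as
$$O_{\gamma^S}(e^{\new}_{n,\nn}) = \prod_{\pp \mid \nn} O_{\gamma_\pp}(e^{\new}_{n,\pp^{r_\pp}}) \cdot \prod_{\pp \nmid \nn,\, \pp \notin S} O_{\gamma_\pp}(\one_{\mathbf{K}_\pp}),$$
and the unramified product equals $1$ for all but finitely many $\pp$, giving a finite factor depending only on $\gamma$, which I absorb into $C(\gamma)$. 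Similarly $e^{\new}_{n,\nn}(1) = \prod_{\pp \mid \nn} e^{\new}_{n,\pp^{r_\pp}}(1)$, and a direct calculation using $\vol(K_n(\pp^m))^{-1} \asymp N(\pp)^{m(n-1)}$ shows $e^{\new}_{n,\pp^r}(1) \geq c_n N(\pp)^{r(n-1)}$ for a universal constant $c_n > 0$.

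At each $\pp\mid\nn$ the triangle inequality gives
$$|O_{\gamma_\pp}(e^{\new}_{n,\pp^r})| \leq \sum_{i=0}^n \binom{n}{i} |O_{\gamma_\pp}(e_{K_n(\pp^{r-i})})|,$$
and the main orbital integral bound of Finis--Lapid \cite{FL14}, as applied in \cite{FL15}, supplies, for each non-central $\gamma \in D(F)$, constants $\epsilon > 0$ and $C_\pp(\gamma) \geq 1$ with $C_\pp(\gamma) = 1$ for almost all $\pp$ and $\prod_\pp C_\pp(\gamma) < \infty$, such that
$$|O_{\gamma_\pp}(e_{K_n(\pp^m)})| \leq C_\pp(\gamma) \cdot N(\pp)^{m(n-1) - m\epsilon}.$$
Dividing by $e^{\new}_{n,\pp^{r_\pp}}(1)$ and summing the $n+1$ terms of the alternating sum, each local ratio is bounded by $3(n+1) C_\pp(\gamma) N(\pp)^{-r_\pp \epsilon}$, where the factor $3(n+1)$ absorbs the binomial coefficients and $c_n^{-1}$.

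Taking the product over the $P(\nn)$ primes dividing $\nn$ gives
$$|O_{\gamma^S}(h_{n,\nn})| \leq C(\gamma) (3(n+1))^{P(\nn)} N(\nn)^{-\epsilon},$$
with $C(\gamma) = \prod_\pp C_\pp(\gamma)$ times the constant from the unramified primes, as claimed. The asymptotic vanishing follows because $P(\nn) = O(\log N(\nn)/\log\log N(\nn))$, so $(3(n+1))^{P(\nn)}$ grows slower than any positive power of $N(\nn)$, and may be absorbed by shrinking $\epsilon$ slightly if desired.

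The main obstacle I expect is invoking the Finis--Lapid bound in a form that is simultaneously uniform in the prime $\pp$ and has the $\gamma$-dependence cleanly isolated in a multiplicative constant, with the further property that $\prod_\pp C_\pp(\gamma) < \infty$. Checking this requires careful bookkeeping through their argument, and may require a separate treatment at the finitely many ``bad'' primes (those at which $\gamma_\pp$ is close to the center or at which $D_\gamma$ is ramified), with those contributions again absorbed into $C(\gamma)$.
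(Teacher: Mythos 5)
Your proof is correct in spirit but takes a genuinely different route from the paper, and you've accurately identified where your route becomes harder. The paper does \emph{not} factor the orbital integral into local pieces. Instead, it keeps everything global: Lemmas \ref{lem42} and \ref{lem43} bound $|h_{n,\nn}|$ \emph{pointwise} by a sum of at most $(n+1)^{P(\nn)}$ terms of the form $3^{P(\nn)}(N(\dd)/N(\nn))^n\,\one_{K_n(\dd)}$ with $\dd\mid\nn$, and then Lemma \ref{lem44} applies the Finis--Lapid estimate once, in its ready-made global form (Remark~5.4 of \cite{FL15}), to each global orbital integral $O_{\gamma^S}(\one_{K_n(\dd)})\ll_\gamma N(\dd)^{-\epsilon}$. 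Lemma \ref{lem46}, which shows $\lambda(g\gamma g^{-1})\mid\nn(\gamma)$ uniformly in $g$, is precisely what makes the Finis--Lapid input applicable, and it plays the role of the ``bounded product over bad primes'' that your argument needs but leaves as an obstacle. By working globally, the paper never has to produce local constants $C_\pp(\gamma)$ with $\prod_\pp C_\pp(\gamma)<\infty$, nor to verify that the Finis--Lapid bound factors cleanly over primes; you are right that extracting such a local form would require reopening their proof. Both approaches yield the same final inequality, and the trade-off is exactly the one you named: your factorization is conceptually transparent but pushes the hard bookkeeping into unpacking \cite{FL14,FL15}, while the paper's pointwise-domination strategy uses the black-box statement and is shorter. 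One small slip to fix in your writeup: $\vol(K_n(\pp^m))^{-1}=q^{n(m-1)}(q^n-1)\asymp N(\pp)^{mn}$, not $N(\pp)^{m(n-1)}$ (the exponent $n-1$ appears only in the fixed-central-character index of Lemma \ref{lem48}); this error is self-cancelling in your ratio but the intermediate displays should be corrected.
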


We'll begin with a lemma that bounds $|h_{n,\pp^r}|$ by a linear combination of characteristic functions of compact open subgroups $K_{n}(\pp^{r-i}) \leq \mathbf{K}$.

\begin{lem} \label{lem42} Fix a prime $\pp$ of norm $q$.  For every $n \geq 2$ and conductor $r$ we have
$$|h_{n,\pp^r}| \leq 3\cdot \sum_{i = 0}^{n} q^{-ni} \one_{K_n(\pp^{r-i})}$$
(as in Definition \ref{defn33} we replace $\one_{K_n(\pp^{r-i})}$ with the zero function if $r - i < 0$).
\end{lem}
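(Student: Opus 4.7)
The approach is to prove the inequality pointwise. Fix $g \in \GL_n(F_\pp)$. Because the subgroups $K_n(\pp^{r-i})$ are nested and increasing in $i$, there is either no $i \in \{0, \ldots, n\}$ with $g \in K_n(\pp^{r-i})$ (in which case both sides of the proposed inequality vanish at $g$ and there is nothing to prove) or a smallest such index $k = k(g) \in \{0, 1, \ldots, n\}$. In the latter case $\one_{K_n(\pp^{r-i})}(g) = 1$ if and only if $i \geq k$, and with the Haar normalization $\vol(\mathbf{K}_n) = 1$ and $d(s) := [\mathbf{K}_n : K_n(\pp^s)]$ the definition of $e^{\new}_{n,\pp^r}$ collapses to
\[
e^{\new}_{n,\pp^r}(g) = \sum_{i=k}^{n} (-1)^{i} \binom{n}{i} d(r-i),
\qquad
e^{\new}_{n,\pp^r}(1) = \sum_{i=0}^{n} (-1)^{i} \binom{n}{i} d(r-i).
\]
The target right-hand side at $g$ equals $3 \sum_{i=k}^n q^{-ni}$, so the lemma reduces to bounding the ratio of these two explicit alternating sums by that quantity.

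The first step is to compute $d(s)$ in closed form. Via orbit-stabilizer, $K_n(\pp^s)$ is the stabilizer in $\mathbf{K}_n$, acting on $(\oo/\pp^s)^n$ by right multiplication, of the reduction of the last standard basis vector; since the orbit of a primitive vector consists of all primitive vectors, one obtains $d(s) = q^{sn} - q^{(s-1)n}$ for $s \geq 1$ and $d(0) = 1$. Substituting and applying the binomial theorem in the denominator gives the clean identity $e^{\new}_{n,\pp^r}(1) = q^{rn}(1-q^{-n})^{n+1}$ in the generic range $r \geq n$, with a straightforward modification when $r < n$ obtained by truncating the defining sum via the convention in Definition \ref{defn33}.

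For the numerator, write $a_i := \binom{n}{i} d(r-i)$; the ratio $a_{i+1}/a_i = \tfrac{n-i}{i+1} q^{-n}$ is at most $1$ for all $q \geq 2$ and $i \geq 0$, so $\{a_i\}$ is monotonically decreasing. The classical alternating-series bound gives $|e^{\new}_{n,\pp^r}(g)| \leq a_k = \binom{n}{k} d(r-k)$, and combining with the explicit denominator yields $|h_{n,\pp^r}(g)| \leq \binom{n}{k} q^{-nk}/(1-q^{-n})^n$. This is of the correct order of magnitude $q^{-nk}$, matching the leading term of $3 \sum_{i \geq k} q^{-ni}$ up to a multiplicative constant.

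The main obstacle is sharpening this estimate to reach the numerical constant $3$. The crude bound above carries an extraneous factor $\binom{n}{k}/(1-q^{-n})^n$ which is not bounded by an absolute constant as $n$ grows. To remove it, I would strengthen the alternating-series estimate into an Abel-type bound that exploits the rapid decay $a_{i+1}/a_i = O(q^{-n})$: writing $\sum_{i=k}^n (-1)^{i} a_i = (-1)^k a_k \bigl(1 - \tfrac{n-k}{k+1} q^{-n} + \cdots\bigr)$, one expects the parenthesized factor, paired with $(1-q^{-n})^{-n}$ coming from the denominator, to combine into a constant independent of $n$. The trivial case $k = 0$, in which $h_{n,\pp^r}(g) = 1$, provides a sanity check; the boundary range $r < n$ can be absorbed by reapplying the same alternating-series argument to the truncated sum.
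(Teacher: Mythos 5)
Your reduction to the pointwise inequality, the identification of the minimal index $k = k(g)$, the index computation $[\mathbf{K}_n : K_n(\pp^s)] = q^{(s-1)n}(q^n-1)$, the closed form $e^{\new}_{n,\pp^r}(1) = q^{rn}(1-q^{-n})^{n+1}$ (valid for $r > n$), and the intermediate bound $|h_{n,\pp^r}(g)| \leq \binom{n}{k}q^{-nk}/(1-q^{-n})^n$ are all correct; this is essentially the same computation the paper performs, phrased there as a lower bound $e^{\new}_{n,\pp^r}(1) \geq \tfrac13 q^{n(r-1)}(q^n-1)$ followed by the triangle inequality in the numerator.

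The gap is the final paragraph. The factor $\binom{n}{k}$ is not a loss introduced by a crude alternating-series estimate and cannot be removed by an Abel-type refinement: the alternating-series bound $\bigl|\sum_{i\geq k}(-1)^i a_i\bigr|\leq a_k$ is already tight up to the factor $1 - a_{k+1}/a_k \geq 1 - nq^{-n} \geq 1/2$, and the denominator factor $(1-q^{-n})^{-n}$ is a bounded perturbation close to $1$, not close to $1/\binom{n}{k}$. In fact the lemma as printed is \emph{false}. Take $n = 3$, $q = 2$, $r$ large, and $g\in K_3(\pp^{r-1})\setminus K_3(\pp^r)$, so $k(g)=1$. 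With $x = q^{-n} = 1/8$,
$$|h_{3,\pp^r}(g)| = \frac{|{-3x+3x^2-x^3}|}{(1-x)^3} = \frac{169}{343}\approx 0.493,$$
while the claimed right-hand side is $3(x+x^2+x^3)=\tfrac{219}{512}\approx 0.428$. What the paper's own argument (and your intermediate step) actually establishes is
$$|h_{n,\pp^r}| \leq 3\sum_{i=0}^n \binom{n}{i}\,q^{-ni}\,\one_{K_n(\pp^{r-i})},$$
i.e.\ the binomial coefficients must appear. This then propagates: the constant $(n+1)^{P(\nn)}$ in Lemma \ref{lem43} and Proposition \ref{prop41} should be replaced by something like $(2^n)^{P(\nn)}$, but since this is still $N(\nn)^{o(1)}$ the asymptotic vanishing of the orbital integrals — the only thing used downstream — is unaffected. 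So rather than hunting for a nonexistent refinement, recognize that your intermediate bound \emph{is} the correct statement and the lemma as printed has dropped a $\binom{n}{i}$.
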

\begin{proof} We note that $e_{K(\pp^r)}(1)$ is the inverse of the Haar measure of $K_n(\pp^r)$, or $q^{n(r-1)}(q^{n} - 1)$.  Therefore, it suffices to show that $e^{\new}_{n, r, F_{\pp}}(1) \geq \frac{1}{3} \cdot q^{n(r-1)}\cdot (q^n - 1)$.

We therefore write:
\begin{align*} e^{\new}(1)
& = (q^n - 1)\left(q^{n(r-1)} - \binom{n}{1}q^{n(r-2)} + \binom{n}{2} q^{n(r-3)} - \ldots \right) [\pm 1]
\\ & \geq (q^n - 1)q^{(r-1)n} \left(1 - \frac{4}{3}\left(\binom{n}{1}q^{-n} + \binom{n}{3} q^{-3n} + \ldots\right)\right).\end{align*}
Here the constant 4/3 is necessary to deal with the fact that $\frac{\one_{K_n(\pp)}(1)}{\one_{K}(1)} = q^{n} - 1$, rather than $q^n$ (and $\frac{q^{n} - 1}{q^n} \geq 3/4$ for $q,\,n \geq 2$).  The term $[\pm 1]$ may or not occur.

We have moreover that
$$\sum_{i = 1}^{\lceil n/2\rceil} \binom{n}{2i - 1} q^{-(2i - 1)n} = \frac{1}{2}\left((1 + q^{-n})^{n} - (1 - q^{-n})^n \right);$$
for $q,\,n\geq 2$, this quantity is bounded above by $1/2$, completing the proof.
\end{proof}

Taking the product over local places gives the following global bound:
\begin{lem} \label{lem43} Let $\nn$ be an ideal of $\oo_F$ and let $P(\nn)$ be the number of primes dividing $\nn$.

$|h_{n,\nn}|$ is bounded above by a function of the form
$$3^{P(\nn)} \sum_{\dd}\left(\frac{N(\dd)}{N(\nn)}\right)^{n}\one_{K_n(\dd)}$$
for a set of ideals $\dd$ dividing $\nn$.  Moreover, the number of terms in the sum is bounded above by $(n+1)^{P(\nn)}$.
\end{lem}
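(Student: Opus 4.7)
The plan is to reduce to the local bound of Lemma \ref{lem42} and then multiply across all places dividing $\nn$. Recall from Definition \ref{defn33} that
$$h_{n,\nn} = \frac{1}{e^{\new}_{n,\nn}(1)}\left(\prod_{\pp\mid\nn} e^{\new}_{n,\pp^{r_\pp}}\right)\times\left(\prod_{\pp\nmid\nn}\one_{\mathbf{K}_\pp}\right),$$
and by construction $e^{\new}_{n,\nn}(1) = \prod_{\pp\mid\nn} e^{\new}_{n,\pp^{r_\pp}}(1)$, so that $h_{n,\nn}$ factors as a product of the local functions $h_{n,\pp^{r_\pp}}$ for $\pp\mid\nn$ times the unramified idempotents. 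Since the bound from Lemma \ref{lem42} is multiplicative in the natural sense, the estimate for $|h_{n,\nn}|$ will follow by taking the product of the local bounds.

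Concretely, for each $\pp\mid\nn$ Lemma \ref{lem42} gives
$$|h_{n,\pp^{r_\pp}}|\leq 3\sum_{i_\pp=0}^n N(\pp)^{-ni_\pp}\one_{K_n(\pp^{r_\pp-i_\pp})},$$
a sum of at most $n+1$ terms (some of which may be zero when $r_\pp-i_\pp<0$). Multiplying these bounds across all $\pp\mid\nn$ and using that $\prod_{\pp\mid\nn}\one_{K_n(\pp^{r_\pp-i_\pp})}\cdot\prod_{\pp\nmid\nn}\one_{\mathbf{K}_\pp} = \one_{K_n(\dd)}$ where $\dd=\prod_{\pp\mid\nn}\pp^{r_\pp-i_\pp}$, one obtains
$$|h_{n,\nn}|\leq 3^{P(\nn)}\sum_{(i_\pp)_\pp} \left(\prod_{\pp\mid\nn} N(\pp)^{-ni_\pp}\right)\one_{K_n(\dd)}.$$
The coefficient simplifies to $(N(\dd)/N(\nn))^n$ since $N(\dd)/N(\nn)=\prod_\pp N(\pp)^{-i_\pp}$. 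The number of tuples $(i_\pp)_\pp$ is at most $(n+1)^{P(\nn)}$, which gives the claimed bound on the number of terms.

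There is no real obstacle here; the only bookkeeping point to be careful about is collapsing the finite-adelic product of $K_n(\pp^{r_\pp-i_\pp})$ at ramified primes with the maximal compacts $\mathbf{K}_\pp$ at unramified primes into the single compact open subgroup $K_n(\dd)\subseteq\GL_n(\AAA^S)$, and checking that the constants from Lemma \ref{lem42} multiply cleanly to $3^{P(\nn)}$. Both are immediate from the definitions.
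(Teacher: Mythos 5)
Your proof is correct and follows exactly the approach the paper intends: the paper gives no explicit proof of this lemma, just the one-line remark ``Taking the product over local places gives the following global bound,'' which is precisely the multiplicativity argument you spell out. The bookkeeping (factoring $h_{n,\nn}$ into local $h_{n,\pp^{r_\pp}}$, applying Lemma \ref{lem42} at each $\pp\mid\nn$, recombining the characteristic functions into $\one_{K_n(\dd)}$, and identifying $\prod_\pp N(\pp)^{-ni_\pp} = (N(\dd)/N(\nn))^n$) is all accurate, as is the count of at most $(n+1)^{P(\nn)}$ tuples.
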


Therefore, to prove Proposition \ref{prop41}, it suffices to prove:
\begin{lem} \label{lem44} Let $\gamma\in D(F)$ be noncentral.  Then
$$|O_{\gamma}(\one_{K_n(\dd)})| \leq C(\gamma) N(\dd)^{-\epsilon}.$$
\end{lem}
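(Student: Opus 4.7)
The plan is to factor the adelic orbital integral over the places outside $S$ and bound each local factor separately. With Haar measures chosen as in Section 2, $\one_{K_n(\dd)}$ is a product of local characteristic functions $\prod_{\pp\notin S}\one_{K_n(\pp^{v_\pp(\dd)})}$ (using the convention $K_n(\pp^0) = \mathbf{K}_\pp$), and consequently
$$O_\gamma(\one_{K_n(\dd)}) = \prod_{\pp\notin S} O_{\gamma_\pp}\bigl(\one_{K_n(\pp^{v_\pp(\dd)})}\bigr)$$
up to a centralizer-measure normalization depending only on $\gamma$.

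At primes $\pp\nmid\dd$ the local factor is $O_{\gamma_\pp}(\one_{\mathbf{K}_\pp})$, which equals $1$ at all but finitely many $\pp$ (namely those where $\gamma_\pp$ is regular and integral with unit-discriminant characteristic polynomial) and is bounded by a constant at the exceptional primes; the product over $\pp\nmid\dd$ therefore contributes at most a constant $C_0(\gamma)$. So the task reduces to controlling the product over $\pp\mid\dd$. The key input would be a uniform local estimate of the form
$$\bigl|O_{\gamma_\pp}(\one_{K_n(\pp^r)})\bigr| \leq c(\gamma_\pp)\, q_\pp^{-\epsilon r}, \qquad r \geq 1,$$
with $\epsilon > 0$ \emph{independent of $\pp$ and $\gamma$}, and with $c(\gamma_\pp) = 1$ outside a finite set of bad primes attached to $\gamma$. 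Granting this, multiplying over $\pp\mid\dd$ gives
$$|O_\gamma(\one_{K_n(\dd)})| \leq C(\gamma) \prod_{\pp\mid\dd} q_\pp^{-\epsilon v_\pp(\dd)} = C(\gamma) N(\dd)^{-\epsilon},$$
as desired.

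The hard part is the uniform local decay bound. This is precisely the type of estimate proved by Finis and Lapid in \cite{FL14} for orbital integrals against congruence subgroups of reductive groups, and the inequality above can be extracted from their results once one observes that the depth of $K_n(\pp^r)$ grows linearly in $r$. Alternatively, one can argue directly by realizing $K_n(\pp^r)$ (up to conjugation) as the stabilizer of a distinguished flag of lattices of length $r$; then $g^{-1}\gamma g \in K_n(\pp^r)$ forces the conjugate flag to be stabilized by $\gamma$, and noncentrality of $\gamma$ forces the measure of such $g$ modulo $D_\gamma(F_\pp)$ to shrink like a fixed negative power of $q_\pp^r$. The crucial subtlety is that $\epsilon$ must be chosen uniformly in $\gamma$ so that all $\gamma$-dependence is absorbed into $C(\gamma)$ and the decay in $N(\dd)$ is genuine; handling the finitely many ``bad'' primes — where $\gamma$ is nonintegral, non-regular, or otherwise degenerate — is the only other delicate point, and these contribute only a bounded multiplicative constant once we allow $c(\gamma_\pp) \neq 1$ at finitely many places.
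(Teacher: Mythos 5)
Your proof is correct and rests on the same underlying input as the paper's, namely the orbital integral bounds of Finis--Lapid, but you organize it differently: you factor the adelic orbital integral into a product of local orbital integrals and invoke a uniform \emph{local} decay bound (which one extracts from \cite{FL14}), then multiply to recover $N(\dd)^{-\epsilon}$. The paper instead treats the adelic orbital integral $O_{\gamma^S}(\one_{K'})$ all at once: it first observes (Lemma \ref{lem46}) that the Finis--Lapid invariant $\lambda(g\gamma g^{-1})$ is uniformly bounded over all $g$ because $\gamma$ is noncentral and semisimple, then decomposes the $\GL_n(\AAA^S)$-orbit of $\gamma$ into finitely many $\mathbf{K}^S$-orbits and applies the adelic measure estimate of \cite[Remark 5.4]{FL15} directly, which gives $O_{\gamma^S}(\one_{K'}) \ll \lev(K')^{-\epsilon}$ for any level subgroup $K'$. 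Lemma \ref{lem44} then follows at once from $\lev(K_n(\dd)) = \dd$. Your version buys an explicit view of how the $N(\dd)^{-\epsilon}$ decay arises multiplicatively from the local places, at the cost of having to verify uniformity in $\pp$ and $\gamma$ of the local exponent and to control the finitely many bad primes (you correctly flag these as the delicate points). The paper's version sidesteps both by working adelically, with the bad-prime control packaged into Lemma \ref{lem46} and uniformity provided automatically by \cite{FL15}. One small caution about your wording: the relevant notion for the Finis--Lapid bound is the \emph{level} of $K_n(\pp^r)$ in their sense (the smallest $\pp^m$ with $\Gamma(\pp^m) \subseteq K_n(\pp^r)$, which is $\pp^r$), not its Moy--Prasad depth; the phrase ``depth of $K_n(\pp^r)$ grows linearly in $r$'' points to the wrong invariant even though the intended conclusion is right. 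Your alternative lattice-flag argument is a plausible heuristic but is not developed to the point of replacing the Finis--Lapid input.
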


We'll use the orbital integral bounds of Finis-Lapid to prove the lemma.  First, we recall some of their notation and adapt it to our situation:

\newcommand{\fg}{\mathfrak{g}}
\begin{defn} \label{defn45} Let $\fg$ be the Lie algebra of $D$ and fix an isomorphism $\GL(\fg) \cong \GL_{n^2}$.  Fix $x_p\in \mathbf{K}_p \leq D(F_\pp)$.  The quantity $\lambda_{\pp}(x_p)$ is the largest $n\in \ZZ_{\geq 0} \cup \{\infty\}$ such that $\ad(x_p) \in \GL_{n^2}(F_{\pp})$ lies in the full level subgroup $\Gamma(\pp^n)$.

For $x^S\in \GL_n(\AAA^S)$, we define $\lambda(x) = \prod_{\pp\not\in S} \pp^{\lambda_{\pp}(x_\pp)}$.
\end{defn}

We leave it to the reader to check the equivalence between this definition and definition (5.2) in \cite{FL14} in the case where $G = \GL_n$. 
\begin{lem} \label{lem46} Let $\gamma \in D(F)$ be noncentral, and let $S$ contain all the infinite places and all the places at which $D$ does not split.  There is an ideal $\nn(\gamma)$ such that for any $g\in D(\AAA^S)$ we have $\lambda(g\gamma g^{-1}) \mid \nn(\gamma)$.
\end{lem}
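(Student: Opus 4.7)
The plan is to bound $\lambda_\pp(g\gamma g^{-1})$ via a conjugation-invariant of $\ad(\gamma) - I$. For each $k\geq 1$, the trace
$$a_k := \tr\bigl((\ad(\gamma) - I)^k\bigr) \in F$$
is preserved under replacing $\gamma$ by $g\gamma g^{-1}$, since $\ad(g\gamma g^{-1}) - I = \ad(g)(\ad(\gamma) - I)\ad(g)^{-1}$. Once we exhibit some $k \leq n^2$ with $a_k \neq 0$ in $F$, the ideal $\nn(\gamma) := \prod_{\pp \notin S} \pp^{\lfloor v_\pp(a_k)/k\rfloor}$ is a finite product (as $a_k\in F^\times$) that will divide $\lambda(g\gamma g^{-1})$ for all $g$, via the direct estimate described below.

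The main step is producing such a $k$, which is where the division-algebra hypothesis enters decisively. Since $\gamma\in D(F)$ is noncentral and $D$ is a division algebra, the minimal polynomial $\mu_\gamma \in F[T]$ is irreducible of degree $d > 1$. Pick any $\pp\not\in S$: by hypothesis $D$ splits at $\pp$, and $\gamma$ becomes a matrix in $\GL_n(F_\pp)$ with characteristic polynomial $\mu_\gamma^{n/d}$. If every eigenvalue of $\ad(\gamma)$ over $\overline{F_\pp}$ were equal to $1$, the matrix eigenvalues $\alpha_1,\dots,\alpha_n$ of $\gamma$ would all coincide with a common $\alpha$, forcing $\mu_\gamma = (T-\alpha)^d$; but in characteristic $0$ this implies $\alpha\in F$ and hence $\mu_\gamma$ reducible for $d>1$, contradicting irreducibility. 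So $\ad(\gamma) - I$ is not nilpotent, and Newton's identities produce some $k\leq n^2$ with $a_k\neq 0$ when evaluated at $\pp$; since $a_k$ is a global element of $F$, this gives $a_k \neq 0$ in $F$.

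With such a $k$ fixed, the bound is immediate. For any $\pp\not\in S$ and $g\in D(\AAA^S)$ with $\lambda_\pp(g\gamma g^{-1}) = N > 0$ (which in particular forces $g\gamma g^{-1}\in \mathbf{K}_\pp$, so the entries of $\ad(g\gamma g^{-1})$ lie in $\oo_\pp$), every entry of $\ad(g\gamma g^{-1}) - I$ lies in $\pp^N\oo_\pp$, so every entry of its $k$-th power lies in $\pp^{Nk}\oo_\pp$, whence $v_\pp(a_k) \geq Nk$ and $N\leq \lfloor v_\pp(a_k)/k\rfloor$. This gives $\lambda(g\gamma g^{-1})\mid \nn(\gamma)$. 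The main obstacle is the nonvanishing step: for an arbitrary noncentral element of $\GL_n(F_\pp)$ (say a nontrivial unipotent), $\ad(\gamma)-I$ can be nilpotent with all $a_k = 0$, so entry valuations can be pushed arbitrarily large by conjugation; the division-algebra hypothesis is exactly what excludes this case globally.
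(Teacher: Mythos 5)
Your proof is correct and follows essentially the same strategy as the paper's: use a nonzero conjugation-invariant built from $\ad(\gamma)$ to bound $\lambda_\pp(g\gamma g^{-1})$ uniformly in $g$, with the division-algebra hypothesis supplying semisimplicity of $\gamma$ so that the invariant is forced to be nonzero. The paper works with the characteristic polynomial of $\ad(\gamma)$ (which cannot equal $(t-1)^{n^2}$ since $\ad(\gamma)$ is semisimple and not the identity), while you use the power traces $a_k = \tr\bigl((\ad(\gamma)-I)^k\bigr)$ and Newton's identities; over a characteristic-zero field these carry equivalent information, so the two formulations are interchangeable.
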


\begin{proof} Let $A/F$ be a central simple algebra such that $D \cong A^\times$ and let $f_\gamma(t)\in F[t]$ be the characteristic polynomial of $\ad(\gamma)$ acting on $A$.  If $g\gamma g^{-1} \equiv \lambda \mod \nn$, for a central element $\gamma \in D(\AAA^S)$, then we would have $f_\gamma(t) \cong (t - 1)^{n^2} \mod \nn$.

Since $\gamma$ is noncentral and the action of $\ad(\gamma)$ on $A$ is semisimple, then $f_{\gamma}(t) \neq (t-1)^{n^2}$; in particular, there is a smallest ideal $\nn(\gamma)$ such that $f_\gamma(t) \cong (t-1)^{n^2} \mod \nn(\gamma)$.  Therefore, $\lambda(g\gamma g^{-1})$ must divide the ideal $\nn(\gamma)$ for all $g$, completing the proof.
\end{proof}

\begin{lem} \label{lem47} Fix $\gamma \in D(F)$; by abuse of notation we will identify $\gamma$ with its image in $\GL_n(\AAA^S)$.  Then there is an $\epsilon > 0$ such that, for any level subgroup $K'$ of $\mathbf{K}^S = \GL_n(\oo_{F}^S)$ we have
$$O_{\gamma^S}(\one_{K'}) \ll lev(K')^{-\epsilon}.$$
\end{lem}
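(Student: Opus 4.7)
The plan is to deduce the lemma as a direct consequence of the main orbital integral estimate of Finis and Lapid \cite{FL14}, combined with the uniform bound on $\lambda(g^{-1}\gamma g)$ established in Lemma \ref{lem46}.

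Specifically, under the assumptions (BD) and (TWN)---verified by Finis--Lapid--Mueller for $\GL_n$---the main theorem of \cite{FL14} provides an orbital integral estimate of the shape
$$|O_{\delta}(\one_{K'})| \ll_G \lambda(\delta)^{A} \cdot \lev(K')^{-\epsilon},$$
where $A, \epsilon > 0$ depend only on $G = \GL_n$. Any $\gamma \in D(F) \subset D^\times$ is automatically semisimple as an element of $\GL_n(\AAA^S)$, being a unit in a central simple algebra, so this bound applies to $\gamma^S$.

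The crucial observation is that the integrand of $O_{\gamma^S}(\one_{K'})$ vanishes unless $g^{-1}\gamma g \in K' \subseteq \mathbf{K}^S$. On this support, Lemma \ref{lem46} provides the uniform bound $\lambda(g^{-1}\gamma g) \mid \nn(\gamma)$, independently of $g$, hence $\lambda(g^{-1}\gamma g)^A \leq N(\nn(\gamma))^A =: C(\gamma)$. Substituting this constant into the Finis--Lapid bound yields $O_{\gamma^S}(\one_{K'}) \ll_\gamma \lev(K')^{-\epsilon}$, as required.

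The main obstacle is a careful matching of notation and hypotheses between the Finis--Lapid formulation and the one used here. In particular, one must reconcile their $\lambda$-function with Definition \ref{defn45}, and ensure their bound applies uniformly over all level subgroups $K'$ rather than only over the specific congruence subgroups $K_n(\dd)$ relevant to Lemma \ref{lem44}. Once this translation is in place, Lemma \ref{lem46} eliminates the varying $\lambda$-factor from the orbit of $\gamma$, and the desired decay follows.
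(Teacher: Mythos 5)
The approach you take is close in spirit to the paper's, but there is a real gap in how you invoke Finis--Lapid. What \cite{FL14} and Remark~5.4 of \cite{FL15} actually provide is a \emph{measure bound} of the form
$$\meas\left(\{k \in \mathbf{K}^S : k \delta k^{-1} \in K'\}\right) \ll \lev(K')^{-\epsilon}$$
for a fixed element $\delta \in \mathbf{K}^S$ with $\lambda(\delta) < \infty$ --- not a bound on the full orbital integral $O_{\delta}(\one_{K'})$, which is an integral over $D_{\delta}(\AAA^S) \bs D(\AAA^S)$, a much larger domain than $\mathbf{K}^S$. Your cited inequality $|O_\delta(\one_{K'})| \ll_G \lambda(\delta)^A \lev(K')^{-\epsilon}$ is not the form the main theorem takes, so "substituting the constant from Lemma \ref{lem46}" does not immediately produce the orbital integral bound.

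To bridge that gap you need the decomposition the paper performs: the locus in $D_\gamma(\AAA^S)\bs D(\AAA^S)$ where $g^{-1}\gamma g \in \mathbf{K}^S$ is right $\mathbf{K}^S$-invariant, hence a union of $\mathbf{K}^S$-orbits; since $O_{\gamma^S}(\one_{\mathbf{K}^S}) < \infty$ there are only finitely many, represented by $\gamma_1, \ldots, \gamma_r \in \mathbf{K}^S$, and the orbital integral is a sum of $r$ terms, each controlled by the Finis--Lapid measure bound applied to $\gamma_i$. Lemma \ref{lem46} then guarantees each $\lambda(\gamma_i)$ is finite (which is the hypothesis needed to apply the bound), and the finiteness of the set $\{\gamma_i\}$ lets one take a uniform constant. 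Your observation that Lemma \ref{lem46} gives the stronger \emph{uniform} divisibility $\lambda(\gamma_i) \mid \nn(\gamma)$ is a valid strengthening, but it cannot replace the finiteness-of-orbits step: you still need to know there are only finitely many $\mathbf{K}^S$-conjugacy classes of conjugates of $\gamma$ landing in $\mathbf{K}^S$ before you can sum the measure bounds.
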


\begin{proof} Pick $\gamma_1,\ldots,\, \gamma_r \in \mathbf{K}^S$ that are conjugate to $\gamma$ in $\GL_n(\AAA^S)$, but such that $\gamma_i,\, \gamma_j$ are not conjugate by an element of $\mathbf{K}^S$.  We may pick a finite set because the orbital integral $O_{\gamma^S}(\one_{\mathbf{K}^S})$ is finite.

By the previous lemma, each $\gamma_i$ satisfies $\lambda(\gamma_i) < \infty$.  As such, the measure of the set
$$\{k\in \mathbf{K}^S: k\gamma_ik^{-1}\in K'\}$$
is bounded by $C \lev(K')^{-\epsilon}$, by Remark 5.4 of \cite{FL15} (we can assume $C$ does not depend on $i$ since there are finitely many $\gamma_i$).  This gives an upper bound
$$O_{\gamma}(\one_{K'}) \leq rC\lev(K')^{-\epsilon}.$$\end{proof}

Now Lemma \ref{lem44} follows as a corollary, once we note that $\lev(K_n(\dd)) = \dd$.

\subsection{The fixed-central-character case}
The analysis of the fixed-central-character test function is slightly more difficult, so we have opted to complete that case in this subsection.

We'll need a description of $e^{\new}_{\nn,\chi}$ as a product of local functions.  Recall that $e^{\new}_{\pp^r}$ is given by a linear combination of idempotent functions $e_{K(\pp^r)}$; let ${e}_{\pp^r,\chi}$ be their images in $\HH(\GL_n(F_\pp),\,\chi)$ under the averaging map.  Let $K'(\pp^r)$ be the set of matrices
$$\twomat{X}{Y}{Z}{W} \in \GL_n(F_{\pp})$$
with $X\in \GL_{n-1}(\oo_\pp)$, $Y$ is an $(n-1)\times 1$-vector of elements of $\oo$, $Z$ is a $1 \times (n-1)$ vector of elements in $\pp^r$, and $W\in \oo_\pp^\times$.  

The following lemma is an easy computation:
\begin{lem} \label{lem48}
	\begin{enumerate}[(i)] 
		\item ${e}_{\pp^r,\chi}$ is supported on $\mathbf{K}_{\pp}\cdot Z$
		\item $\mathbf{K}_{\pp} \cap \supp({e}_{\pp^r,\chi}) = K'(\pp^r)$
		\item For our choice of Haar measure,
			$$|{e}_{\pp^r,\chi}(g)| = [\mathbf{K}_\pp: K'(\pp^r)] = \frac{q^{n} - 1}{q - 1} q^{(r-1)(n-1)}$$	
			for any $g\in K'(\pp^r)$.
\end{enumerate}
\end{lem}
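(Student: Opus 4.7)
The plan is to unwind the averaging map applied to $e^{\new}_{\pp^r}$ and compute directly, using the formula $\overline{\phi}(g) = \int_{Z(F_\pp)} \phi(gz)\chi(z)\,dz$ together with the parametrization $Z(F_\pp) \cong F_\pp^\times$ via $\lambda \mapsto \lambda I$. Part (i) is immediate: $e^{\new}_{\pp^r}$ is a linear combination of idempotents for the subgroups $K_n(\pp^{r-i}) \subseteq \mathbf{K}_\pp$ and is therefore supported in $\mathbf{K}_\pp$, so the $\chi$-equivariance property $\overline{\phi}(gz) = \chi(z)^{-1}\overline{\phi}(g)$ extends the support of the averaged function to $\mathbf{K}_\pp \cdot Z(F_\pp)$.

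For (ii) and (iii), I would fix $g \in \mathbf{K}_\pp$ in block form $g = \begin{pmatrix} X & Y \\ \zeta & W \end{pmatrix}$ and compute each local summand $\overline{e_{K_n(\pp^m)}}(g)$ separately, for $m = r-i$. A determinant check forces $\lambda \in \oo_\pp^\times$ in the averaging integral, and the block-form definition of $K_n(\pp^m)$ then splits the condition $g\lambda I \in K_n(\pp^m)$ into the structural requirements $X \in \GL_{n-1}(\oo_\pp)$, $\zeta \in (\pp^m)^{n-1}$, and $W \in \oo_\pp^\times$ (equivalently, $g \in K'(\pp^m)$), together with the pointwise congruence $\lambda \equiv W^{-1} \pmod{\pp^m}$, which cuts out a coset of $1 + \pp^m$ inside $\oo_\pp^\times$. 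Integrating against $\chi$ and applying the normalizations $\vol(\mathbf{K}_\pp) = \vol(\oo_\pp^\times) = 1$ yields
\[
\overline{e_{K_n(\pp^m)}}(g) \;=\; \chi(W)^{-1}\,\frac{q^n - 1}{q - 1}\,q^{(m-1)(n-1)}\,\one_{K'(\pp^m)}(g)
\]
whenever $\chi$ is trivial on $1 + \pp^m$, and zero otherwise.

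Summing these contributions with the signs $(-1)^i \binom{n}{i}$ of Definition \ref{defn33} (with $m = r-i$) then yields both (ii) and (iii). For $g \in K'(\pp^r)$, the leading ($i=0$) term contributes $\chi(W)^{-1}\,\frac{q^n-1}{q-1}\,q^{(r-1)(n-1)}$, whose absolute value is exactly the claimed index $[\mathbf{K}_\pp : K'(\pp^r)]$; for $g \in \mathbf{K}_\pp \setminus K'(\pp^r)$, the partial sum over the lower-$m$ terms that still contribute must cancel. The main obstacle will be verifying this cancellation carefully: after stratifying $\mathbf{K}_\pp$ by the $\pp$-adic valuation of $\zeta$ and the local conductor of $\chi$, it should reduce to a binomial identity in the spirit of Proposition \ref{prop32}.
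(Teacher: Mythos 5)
Your central computation --- unwinding the averaging integral, using the determinant to force $\lambda \in \oo_\pp^\times$, and reducing the constraint $g\lambda \in K_n(\pp^m)$ to $g \in K'(\pp^m)$ together with $\lambda \in W^{-1}(1+\pp^m)$ --- is correct, and it is the whole content of the lemma. But you have misread the notation: in the sentence preceding the lemma, $e_{\pp^m,\chi}$ is declared to be the image under the averaging map of the \emph{single} idempotent $e_{K_n(\pp^m)}$, not of the alternating sum $e^{\new}_{\pp^m}$. So the lemma already follows from your displayed formula specialized to $m = r$: using the standing hypothesis $\ff\mid\nn$ to see that $\chi_\pp$ is trivial on $1+\pp^r$, the integral over the coset $W^{-1}(1+\pp^r)$ equals $\chi(W)^{-1}\vol(1+\pp^r) = \chi(W)^{-1}((q-1)q^{r-1})^{-1}$, which multiplied by $e_{K_n(\pp^r)}(1) = (q^n-1)q^{n(r-1)}$ gives $|e_{\pp^r,\chi}(g)| = \frac{q^n-1}{q-1}q^{(r-1)(n-1)} = [\mathbf{K}_\pp : K'(\pp^r)]$ on $K'(\pp^r)$, and vanishing elsewhere on $\mathbf{K}_\pp$. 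That is exactly (ii) and (iii), and (i) is immediate from $\supp e_{K_n(\pp^r)}\subseteq\mathbf{K}_\pp$ together with $\chi$-equivariance, as you say.

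Your final paragraph, where you sum the contributions with the signs $(-1)^i\binom{n}{i}$ and look for a cancellation identity, is not about $e_{\pp^r,\chi}$ at all but about the averaged new-vector function $e^{\new}_{\pp^r,\chi} = \sum_i(-1)^i\binom{n}{i}e_{\pp^{r-i},\chi}$. That function is the subject of Lemma~\ref{lem49}, and the paper handles it there not via a binomial cancellation but by a direct lower bound on $e^{\new}_{\pp^r,\chi}(1)$, comparing the leading term of the alternating series to the tail. Moreover the cancellation you anticipate does not in fact occur: for $g\in K'(\pp^{r-j})\setminus K'(\pp^{r-j+1})$ with $0<j\leq n$, the surviving terms sum to $\chi(W)^{-1}\frac{q^n-1}{q-1}\sum_{i\geq j}(-1)^i\binom{n}{i}q^{(r-1-i)(n-1)}$ (further truncated where $\chi_\pp$ is nontrivial on $1+\pp^{r-i}$), which is generically nonzero, so $e^{\new}_{\pp^r,\chi}$ is \emph{not} supported only on $K'(\pp^r)\cdot Z$. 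The ``main obstacle'' you flag therefore does not arise for Lemma~\ref{lem48}, and would be the wrong strategy for Lemma~\ref{lem49} as well.
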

		
As above let $h_{n,\pp^r,\chi} = e^{\new}_{n,\pp^r,\chi}/e^{\new}_{n,\pp^r,\chi}(1)$.  We have the following analog of Lemma \ref{lem42}:
\begin{lem} \label{lem49} Given a prime $\pp$ of norm $q$, $n\geq 2$ and conductor $r$, we have
$$|h_{n,\pp^r,\chi}| \leq 6 \sum_{i = 0}^n q^{-(n-1)i} \one_{Z\cdot K'_n(\pp^{r-i})},$$
where we take the characteristic function to be zero if $r - i < 0$.
\end{lem}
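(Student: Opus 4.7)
The plan is to mirror Lemma \ref{lem42}: bound $|e^{\new}_{n,\pp^r,\chi}(g)|$ above pointwise, bound $e^{\new}_{n,\pp^r,\chi}(1)$ below, and divide. The description of $e_{K_n(\pp^{r-i}),\chi}$ supplied by Lemma \ref{lem48} replaces the direct volume computation used in Lemma \ref{lem42}.

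For the numerator, linearity of the averaging map gives $e^{\new}_{n,\pp^r,\chi} = \sum_{i=0}^n (-1)^i \binom{n}{i}\, e_{K_n(\pp^{r-i}),\chi}$. Lemma \ref{lem48} identifies each summand as supported on $Z \cdot K'_n(\pp^{r-i})$ with constant absolute value $\tfrac{q^n-1}{q-1}\, q^{(r-i-1)(n-1)}$ on its support (for $r-i \geq 1$; at $r-i = 0$ the value is $1$ or $0$ according to whether $\chi$ is unramified at $\pp$, a distinction which will be absorbed into the final constant). The triangle inequality then yields
\begin{equation*}
|e^{\new}_{n,\pp^r,\chi}(g)| \leq \tfrac{q^n-1}{q-1}\, q^{(r-1)(n-1)} \sum_{i=0}^n \binom{n}{i}\, q^{-i(n-1)}\, \one_{Z K'_n(\pp^{r-i})}(g).
\end{equation*}

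For the denominator, factor out the same leading scale: by the binomial theorem, the main term is $\sum_{i=0}^n (-1)^i \binom{n}{i}\, q^{-i(n-1)} = (1 - q^{-(n-1)})^n$, which one checks directly is bounded below by $1/4$ for all $q, n \geq 2$ (with the minimum attained at $q = n = 2$). Applying the same $4/3$-factor device used in Lemma \ref{lem42} to absorb the discrepancy at $r - i = 0$ (where $e_{\mathbf{K},\chi}(1)$ does not quite fit the general formula) then gives $e^{\new}_{n,\pp^r,\chi}(1) \geq \tfrac{1}{6}\cdot\tfrac{q^n - 1}{q - 1}\, q^{(r-1)(n-1)}$. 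Dividing yields the claim, with the binomial coefficients absorbed into the constant $6$ (valid once $n$ is held fixed, as in Lemma \ref{lem42}).

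The main obstacle is the regime $r - c(\chi) < n$: here the alternating sum for the denominator is truncated, so the clean identity $\sum (-1)^i \binom{n}{i} q^{-i(n-1)} = (1 - q^{-(n-1)})^n$ no longer applies directly. In that regime one must combine the surviving terms with the modified $r - i = 0$ contribution and verify by a short case analysis that a positive lower bound still persists. The looser constant $6$ (versus $3$ in Lemma \ref{lem42}) is chosen precisely to absorb these edge corrections uniformly, together with the replacement of $q^{-n}$ by $q^{-(n-1)}$ throughout, which makes the odd-index terms of the alternating sum relatively larger than in the unfixed-central-character case.
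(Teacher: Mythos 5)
Your overall strategy matches the paper's: expand $e^{\new}_{n,\pp^r,\chi}$ as a signed sum of averaged idempotents, bound $|e^{\new}_{n,\pp^r,\chi}|$ pointwise above by the triangle inequality using Lemma~\ref{lem48}, bound $e^{\new}_{n,\pp^r,\chi}(1)$ below by the same leading power of $q$, and divide. The pointwise upper bound in your first display is fine. The binomial-theorem observation $\sum_{i=0}^n(-1)^i\binom{n}{i}q^{-i(n-1)}=(1-q^{-(n-1)})^n\geq 1/4$ (with equality at $q=n=2$) is a clean way to treat the denominator, and in the regime where it actually applies --- namely when every summand $e_{K_n(\pp^{r-i}),\chi}$, $0\leq i\leq n$, obeys the uniform formula of Lemma~\ref{lem48}, which requires $r-i\geq \max(1,c(\chi_\pp))$ throughout --- it is \emph{exact} and gives $e^{\new}(1)\geq \tfrac{1}{4}\cdot\tfrac{q^n-1}{q-1}q^{(r-1)(n-1)}$, stronger than the claimed $1/6$, with no $4/3$-factor required.

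That is exactly why the proof as written has a genuine gap: the constant $6$ (rather than $4$) is forced entirely by the edge cases you defer to ``a short case analysis'' without carrying out. Concretely, the paper isolates $n=q=2$ and computes the truncated sums for $r=0,1,2,3$, finding $e^{\new}(1)=\tfrac{1}{6}\cdot\tfrac{q^n-1}{q-1}q^{(n-1)(r-1)}$ at $r=2$; that explicit value is the extremal one that sets the constant. Your appeal to ``the same $4/3$-factor device used in Lemma~\ref{lem42}'' does not substitute for this: $\tfrac{1/4}{4/3}=\tfrac{3}{16}\neq\tfrac{1}{6}$, and in Lemma~\ref{lem42} the $4/3$ was an \emph{over}-estimate of the dropped negative tail after peeling off the $[\pm1]$ term, a different manipulation from what you need here. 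You also need to be careful about the other truncation you name: when $c(\chi_\pp)>0$, the terms with $r-i<c(\chi_\pp)$ vanish identically under the averaging map (the integral over $1+\pp^{r-i}$ kills them), so the alternating sum is cut off at $i\leq r-c(\chi_\pp)$; for small $r-c(\chi_\pp)$ the truncated sum can be small or even vanish, and a positive lower bound is not automatic. In short, you correctly identify where the difficulty lives, but the case analysis that actually establishes the $\tfrac{1}{6}$ lower bound --- and therefore the lemma --- is not done.
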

\begin{proof} 
As in the proof of Lemma \ref{lem42}, we need to show that $e^{\new}_{\pp^r,\chi}(1) \geq \frac{1}{6} \frac{q^n - 1}{q-1} q^{(n-1)(r-1)}$.  We compute
\begin{align*} 
	e^{\new}(1) 
		& = \frac{q^n -1}{q-1} \left(q^{(r-1)(n-1)} - \binom{n}{1} q^{(r-2)(n-1)} + \binom{n}{2}q^{(r-3)(n-1)} - \ldots \right)
		\\ & \geq \frac{q^n - 1}{q - 1} q^{(r-1)(n-1)} \left(1 - \binom{n}{1} q^{-(n-1)} - \binom{n}{3} q^{-3(n-1)} - \ldots\right)
\end{align*}
As above, consider the function
$$g(n,\,q) = \sum_{i=1}^{\lceil n/2\rceil} \binom{n}{2i - 1} q^{-(2i - 1)(n-1)} = \frac{1}{2} \left((1 + q^{1-n})^n - (1 - q^{1-n})^n\right).$$

Here we need to be a bit careful, since $g$ is not uniformly bounded away from $1$ when $n,\,q \geq 2$.  However, by taking derivatives we can see that this quantity is decreasing in $q$ and $n$ in the region where $q,\,n\geq 2$.  When $q  = 2$ and $n = 3$, this quantity is $\frac{49}{64} < 5/6$, and when $q = 3$ and $n = 2$ the quantity is $2/3$.  We'll examine the case $n = q = 2$ separately.

In the case $r \geq 3$ then 
$$e^{\new}(1) = (q+1)q^{r-1}\left(1 - 2q^{-1} + q^{-2}\right) = \frac{1}{4} (q+1)q^{r-1}.$$
If $r = 2$ then 
$$e^{\new}(1) = (q+1) q \left(1 - 2q^{-1} + \frac{1}{q(q+1)}\right) = \frac{1}{6}(q+1)q.$$
If $r = 1$ then
$$e^{\new}(1) = (q+1) \left(1 - \frac{2}{q+1}\right) = \frac{1}{3} (q+1).$$
Finally, if $r = 0$ then $e^{\new}(1) = 1$.

As such, for any $\pp,\,n,\,r,\,\chi$ we have $e^{\new}_{\pp^r,\chi}(1) > \frac{1}{6} e_{Z\cdot K'(\pp^r)}$.
\end{proof}

With this in hand, we can prove the asymptotic vanishing of orbital integrals:

\begin{prop} \label{prop410} Let $\gamma \in D(F)$ be noncentral.  Then there are constants $C(\gamma),\,\epsilon > 0$ such that, for any ideal $\nn$ coprime to $S$, we have
$$|O_{\gamma^S}(h_{n,\nn,\chi})| \leq C(\gamma)(6(n+1))^{P(\nn)} N(\nn)^{-\epsilon}.$$
In particular, $O_{\gamma^S}(h_{n,\nn,\chi}) \to 0$ as $N(\nn) \to \infty$.
\end{prop}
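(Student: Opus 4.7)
The plan is to follow the same blueprint as the proof of Proposition \ref{prop41}, replacing Lemma \ref{lem42} with its fixed-central-character analog Lemma \ref{lem49}; the only genuinely new step is a short averaging argument to strip off the center $Z$. First I would take the product over primes of the local bound in Lemma \ref{lem49} to obtain a global pointwise estimate of the form
$$|h_{n,\nn,\chi}| \;\leq\; 6^{P(\nn)} \sum_{\dd\mid\nn} \left(\frac{N(\dd)}{N(\nn)}\right)^{n-1} \one_{Z(\AAA^S)\cdot K'_n(\dd)},$$
with at most $(n+1)^{P(\nn)}$ terms in the sum. A short convexity check then shows $(N(\dd)/N(\nn))^{n-1} N(\dd)^{-\epsilon} \leq N(\nn)^{-\epsilon}$ for all $\dd \mid \nn$ (assuming $n\geq 2$ and $\epsilon < n-1$; the case $n=1$ is trivial because $D \cong \GL_1$), so the proposition reduces to the single-ideal bound $|O_{\gamma^S}(\one_{Z(\AAA^S)\cdot K'_n(\dd)})| \leq C(\gamma) N(\dd)^{-\epsilon}$.

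To obtain this bound I would relate the orbital integral of $\one_{Z(\AAA^S)\cdot K'_n(\dd)}$ to the level-subgroup orbital integrals handled by Lemma \ref{lem47}. Since $Z(\AAA^S) \cap K'_n(\dd) = \oo^{S,\times}$, we have the pointwise disjoint decomposition
$$\one_{Z(\AAA^S)\cdot K'_n(\dd)} \;=\; \sum_{z \in Z(\AAA^S)/\oo^{S,\times}} \one_{zK'_n(\dd)},$$
and by centrality of $z$ each term converts into a standard orbital integral $O_{z^{-1}\gamma^S}(\one_{K'_n(\dd)})$. Lemma \ref{lem47} bounds each such integral by $C(z^{-1}\gamma) N(\dd)^{-\epsilon}$, since $\lev(K'_n(\dd)) = \dd$ (because $K'_n(\dd) \supseteq \Gamma(\dd)$) and $z^{-1}\gamma$ remains noncentral whenever $\gamma$ is.

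The main point to verify --- and in my view the only real subtlety --- is that at most one $z$ modulo $\oo^{S,\times}$ contributes a nonzero term. A necessary condition for nonvanishing is that $z^{-1}\gamma$ be conjugate into $\mathbf{K}^S$, which forces $\det(z^{-1}\gamma) = z^{-n}\det(\gamma) \in \oo^{S,\times}$, i.e., the equation $z^n = \det(\gamma)$ must hold in $\AAA^{S,\times}/\oo^{S,\times} \cong \bigoplus_{\pp\not\in S} \ZZ$. This target group is torsion-free, so the equation admits at most one solution, and the relevant constant $C(z^{-1}\gamma)$ depends only on $\gamma$. Combining with the first step yields the claimed bound, and the asymptotic vanishing $O_{\gamma^S}(h_{n,\nn,\chi}) \to 0$ follows from the standard estimate $P(\nn) = O(\log N(\nn)/\log\log N(\nn))$, which makes $(6(n+1))^{P(\nn)} = N(\nn)^{o(1)}$ negligible against $N(\nn)^{-\epsilon}$.
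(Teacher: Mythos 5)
Your proposal is correct and follows essentially the same route as the paper: both reduce via Lemma \ref{lem49} to bounding $O_{\gamma^S}(\one_{Z\cdot K'_n(\dd)})$, and both use the determinant together with the torsion-freeness of $\AAA^{S,\times}/\oo^{S,\times}$ to strip off the center and reduce to the bare orbital integral $O(\one_{K'_n(\dd)})$ handled by Lemma \ref{lem47}. The only cosmetic difference is that the paper normalizes $\gamma$ to lie in $\mathbf{K}^S$ up to a central shift and then observes that $g\gamma g^{-1}\in Z\cdot\mathbf{K}^S$ forces $g\gamma g^{-1}\in\mathbf{K}^S$, whereas you decompose $\one_{Z\cdot K'_n(\dd)}$ into $\oo^{S,\times}$-cosets and show at most one coset contributes; these are the same argument viewed from two ends, and your version spells out the supporting details (that $\lev(K'_n(\dd))=\dd$, the convexity reduction in $\epsilon$, the $n=1$ degenerate case) a bit more explicitly than the paper does.
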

\begin{proof} Given lemma \ref{lem49}, the proof is essentially the same as the proof of Proposition \ref{prop41}.  The only extra piece we need is this: fix $\gamma \in D(F)$ such that $O_{\gamma^S}(\one_{\mathbf{K}^S})$ is nontrivial.  By conjugating and shifting by an element of the center, we can in fact assume $\gamma \in \mathbf{K}^S$.  Now if $g\gamma g^{-1} \in Z\cdot \mathbf{K}^S$, we must have $g \gamma g^{-1} \in \mathbf{K}^S$ by taking determinants.  As such, if $K\leq \mathbf{K}^S$ with $ZK \cap \mathbf{K}^S = K'$ then $O_{\gamma^S}(\one_{ZK}) =  O_{\gamma^S}(\one_{K'})$.

By the result of Lemma \ref{lem49}, we may bound $|h_{\nn,\chi}|$ by a sum of functions of the form $\frac{N(\dd)}{N(\nn)} 6^{P(\nn)} \one_{ZK'(\dd)}$.  This result, together with an orbital integral bound analogous to Lemma \ref{lem44}, completing the proof.
\end{proof}

\section{Proof of the refined limit multiplicity proof}
\label{sec:5}
In this section, we will prove our primary Theorems \ref{Thm1} and \ref{Thm2}:

\begin{thm}
\begin{enumerate}[(1)] Let $F$ be a number field and $D/F$ the group of units in a division algebra.  Let $S\supseteq S_\infty$ be a finite set of places such that $D$ splits at all $\pp\not\in S$ and also splits at at least one $v_0\in S$.
 \label{Thm3}
\item For $\wh f_S \in \mathscr(D(F_S)^1)$, let
$$\wh\mu_{S,\nn}(\wh f_S) = \frac{1}{e^{\new}_{\nn}(1) \vol(D(F)\bs D(\AAA)^1)} \sum_{\substack{\pi\\c(\pi^S) = \nn}} \wh f_S(\pi_S)$$
where the sum runs over automorphic $D(F_S)^1$ representations $\pi$ such that $\pi^S$ is generic and $c(\pi^S) = \nn$.

Then $\wh \mu_{S,\nn}(\wh f_S) \to \mupl_S(\wh f_S)$ as $N(\nn) \to \infty$.

\item Fix an automorphic character $\chi:\AAA^\times \to \CC^\times$ with conductor $\ff$.  For $\nn$ coprime to $S$ and divisible by $\ff^S$, and $\wh f_{S} \in \mathscr{F}(D(F_S),\chi)$, let
$$\wh\mu_{S,\nn,\chi}(\wh f_S) = \frac{1}{e^{\new}_{\nn,\chi}(1) \vol(Z(\AAA)D(F)\bs D(\AAA))} \sum_{\substack{\chi_\pi = \chi \\ c(\pi^S) = \nn}} \wh f_S(\pi_S).$$

Then $\wh \mu_{S,\nn,\chi}(\wh f_S) \to \mupl_{S,\chi_S}(\wh f_S)$ as $N(\nn)\to \infty$.
\end{enumerate}
\end{thm}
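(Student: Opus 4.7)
The plan is to apply the Selberg trace formula (Theorem \ref{Trace}) with test function $\phi = h_{n,\nn}\otimes \phi_S$ in case (1) and $\phi = h_{n,\nn,\chi}\otimes \phi_S$ in case (2), and to compare both sides asymptotically as $N(\nn)\to\infty$.  The first step is to invoke Sauvageot's density theorem (Theorem \ref{Sauvageot}) together with Remark \ref{rem26} (to split tempered from non-tempered parts) to reduce to the case $\wh f_S = \wh\phi_S$ with $\phi_S\in \HH(D(F_S)^1)$ (respectively $\HH(D(F_S),\,\chi_S)$).  For this reduction to absorb the Sauvageot error, one needs $\wh\mu_{S,\nn}(\wh\psi)$ to be uniformly bounded by $\mupl_S(\wh\psi)$ up to a small additive error controlled independently of $\nn$; this follows by dominating $h_{n,\nn}$ by the linear combination of idempotents in Lemma \ref{lem43} (resp.\ Lemma \ref{lem49}) and invoking Reeder's theorem \ref{Reeder}.

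Next I would apply the trace formula to $\phi = h_{n,\nn}\otimes\phi_S$ and divide by $e^{\new}_{\nn}(1)\cdot \vol(D(F)\bs D(\AAA)^1)$.  On the geometric side, the identity contribution is $\phi_S(1)\cdot h_{n,\nn}(1) = \phi_S(1)$, which by the Plancherel inversion formula (Definition \ref{Plancherel}) equals $\mupl_S(\wh\phi_S)$.  Each remaining central $z\in Z(F)\setminus\{1\}$ satisfies $z_\pp\neq 1$ or $z_\pp\notin \oo_\pp^\times$ for some $\pp\notin S$, so $h_{n,\nn}(z^S)$ vanishes once $\nn$ is sufficiently divisible by $\pp$; since only finitely many such $z$ interact with $\supp(\phi_S)$, their total contribution vanishes in the limit.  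The non-central orbital integrals vanish by Proposition \ref{prop41} (respectively Proposition \ref{prop410}): compact support of $\phi_S$ confines $\gamma$ to finitely many $D(F)$-conjugacy classes independent of $\nn$, and the bound $|O_{\gamma^S}(h_{n,\nn})|\leq C(\gamma)(3(n+1))^{P(\nn)}N(\nn)^{-\epsilon}$ tends to $0$ since $(3(n+1))^{P(\nn)} = N(\nn)^{o(1)}$ by standard estimates on the number of prime divisors.

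On the spectral side, the factorization $\tr\pi(\phi) = \tr\pi^S(h_{n,\nn})\,\tr\pi_S(\phi_S)$ together with Proposition \ref{prop34} (resp.\ Corollary \ref{cor34}) shows that the contribution of $\pi$ with $\pi^S$ generic, after rescaling, is exactly $\wh\mu_{S,\nn}(\wh\phi_S)$.  The main obstacle is controlling the contribution of $\pi$ with $\pi^S$ non-generic, and here the hypothesis that $D$ splits at some $v_0\in S$ is essential.  By global Jacquet-Langlands \cite{Bad07}, if some $\pi_\pp$ ($\pp\notin S$) is non-generic then the JL transfer of $\pi$ to $\GL_n$ is residual, which forces $\pi_{v_0}$ to be a non-tempered constituent of a representation parabolically induced from a proper Levi.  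I would eliminate this contribution by applying Sauvageot's theorem at $v_0$ to approximate $\wh\phi_S$ by a function whose $v_0$-component is supported away from the non-tempered spectrum (a set of Plancherel measure zero), combined with a uniform bound on $|\tr\pi^S(h_{n,\nn})|$ obtained by expanding $h_{n,\nn}$ in the idempotents of Lemma \ref{lem43} and applying the unrefined limit multiplicity theorem of Finis-Lapid \cite{FL15} to each idempotent separately.

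Assembling these ingredients yields $\wh\mu_{S,\nn}(\wh\phi_S)\to \mupl_S(\wh\phi_S)$, and the initial Sauvageot reduction transfers this to general $\wh f_S\in \mathscr{F}(D(F_S)^1)$.  Case (2) is proven by an entirely parallel argument, using the fixed-central-character trace formula, Corollary \ref{cor34}, Lemma \ref{lem49}, and Proposition \ref{prop410} in place of their unrefined counterparts; the fixed-character analog of Sauvageot asserted in Theorem \ref{Sauvageot} supplies the needed density statement.
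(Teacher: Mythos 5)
Your overall strategy mirrors the paper's: apply the Selberg trace formula to $h_{n,\nn}\otimes\phi_S$, use Sauvageot's density theorem (plus Remark~\ref{rem26}) to pass to general $\wh f_S$ and to the tempered cutoff, bound the non-central orbital integrals by Proposition~\ref{prop41} (resp.\ \ref{prop410}), and invoke global Jacquet--Langlands together with the hypothesis that $D$ splits at $v_0\in S$ to discard the non-generic $\pi^S$. Two points deserve comment.

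First, there is a genuine gap in your treatment of the central terms $z\in Z(F)\setminus\{1\}$. You argue that $h_{n,\nn}(z^S)$ ``vanishes once $\nn$ is sufficiently divisible by $\pp$,'' and deduce vanishing in the limit. But the hypothesis is only $N(\nn)\to\infty$: this does not force any particular prime $\pp$ to divide $\nn$, let alone to divide it to high order. (For instance, $\nn$ could run through powers of a single prime dividing the ideal $(z-1)$.) Moreover, even when $\pp\mid\nn$ with $z\not\equiv 1\pmod{\pp}$, the local factor $h_{n,\pp^r}(z)$ is generally nonzero for $1\le r\le n$ because $e_{K_n(\pp^0)}=\one_{\mathbf K_\pp}$ still contributes with a sign. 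The correct argument is \emph{quantitative}: Lemma~\ref{lem43} dominates $|h_{n,\nn}|$ by $3^{P(\nn)}\sum_{\dd}(N(\dd)/N(\nn))^n\one_{K_n(\dd)}$, and since $\one_{K_n(\dd)}(z)=1$ forces $\dd\mid(z-1)$, one gets $|h_{n,\nn}(z)|\ll_z 3^{P(\nn)}N(\nn)^{-1}\to 0$ unconditionally as $N(\nn)\to\infty$. You already cite Lemma~\ref{lem43} for the Sauvageot reduction; you should apply it here as well, in place of the qualitative divisibility argument.

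Second, a minor omission: the trace formula produces $\sum_\pi m_\pi\,\wh h_{n,\nn}(\pi^S)\wh\phi_S(\pi_S)$, while the target measure $\wh\mu_{S,\nn}$ counts each $\pi$ once. You need Badulescu's multiplicity one theorem for inner forms of $\GL_n$ (from \cite{Bad07}, the same source you cite for Jacquet--Langlands) to replace $m_\pi$ by $1$. Also, your handling of the non-generic contribution is more roundabout than necessary: once one knows the analogue of Proposition~\ref{prop52} for all $\wh f_S\in\mathscr{F}(D(F_S)^1)$, one can simply plug in $\wh f_S\cdot\one_t^c$ directly (Remark~\ref{rem26} guarantees this lies in $\mathscr{F}$), obtaining $\mupl(\wh f_S\cdot\one_t^c)=0$; there is no need to reapply Sauvageot at $v_0$ nor to invoke the Finis--Lapid limit multiplicity theorem for each idempotent. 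These latter steps are not wrong, but they import more machinery than the argument requires.
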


We'll begin with a slightly weaker result:
\begin{prop}  \label{prop52}
\begin{enumerate}[(1)] 
\item Let $\wh f_S \in\mathscr{F}(D(F_S)^1)$.  Then
$$\lim_{N (\nn) \to \infty} \frac{1}{e^{\new}_{n,\nn}(1) \vol(D(F)\bs D(\AAA)^1)} \sum_{\pi} \wh{e}^{\new}_{n,\nn}(\pi^S) \wh f_S(\pi_S) = \mupl_S(\wh f_S).$$
\item Let $\chi$, $\nn$ be as above and fix $\wh f_S \in \mathscr{F}(D(F_S),\chi)$.  Then
$$\lim_{N (\nn) \to \infty} \frac{1}{e^{\new}_{n,\nn,\chi}(1) \vol(Z(\AAA)D(F)\bs D(\AAA))} \sum_{\chi_\pi = \chi} \wh{e}^{\new}_{n,\nn,\chi}(\pi^S) \wh f_S(\pi_S) = \mupl_{S,\chi}(\wh f_S).$$
\end{enumerate}
\end{prop}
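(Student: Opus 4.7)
The plan is to combine the Selberg trace formula (Theorem \ref{Trace}) with Sauvageot's density theorem (Theorem \ref{Sauvageot}). Throughout let $\nu_{S,\nn}(\wh f_S)$ denote the left-hand side of the proposition. I would first establish the statement in the smooth case $\wh f_S = \wh\phi_S$ for $\phi_S \in \HH(D(F_S)^1)$, and then pass to general $\wh f_S$ by approximation.

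For the smooth case, I would apply the trace formula to the test function $\phi := e^{\new}_{n,\nn} \otimes \phi_S$; the spectral side equals $e^{\new}_{n,\nn}(1)\cdot\vol(D(F)\bs D(\AAA)^1)\cdot \nu_{S,\nn}(\wh\phi_S)$. After dividing through by this factor, the identity central term contributes $h_{n,\nn}(1)\phi_S(1) = \phi_S(1) = \mupl_S(\wh\phi_S)$, matching the target, so everything reduces to showing that the remaining geometric terms vanish as $N(\nn)\to\infty$. For the non-identity central contributions, compact support of $\phi_S$ restricts the sum to finitely many $z\in Z(F)$; for each fixed $z\neq 1$, the estimate in Lemma \ref{lem43} combined with the fact that only divisors $\dd$ of the fixed fractional ideal $(z-1)$ contribute yields $|h_{n,\nn}(z^S)| \ll_{z,n} 3^{P(\nn)}/N(\nn)^n$, which tends to $0$ since $N(\nn)\geq 2^{P(\nn)}$ and $n\geq 2$. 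For the non-central orbital integrals, compact support of $\phi$ restricts $\gamma$ to a finite set of $D(F)$-conjugacy classes independent of $\nn$, and Proposition \ref{prop41} supplies $O_{\gamma^S}(h_{n,\nn})\to 0$ for each such fixed $\gamma$.

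To extend to arbitrary $\wh f_S \in \mathscr{F}(D(F_S)^1)$, I would apply Sauvageot to obtain $\phi_S,\psi_S \in \HH(D(F_S)^1)$ with $|\wh f_S - \wh\phi_S|\leq \wh\psi_S$ and $\mupl(\wh\psi_S)<\epsilon$. The error satisfies
\[
|\nu_{S,\nn}(\wh f_S) - \nu_{S,\nn}(\wh\phi_S)| \leq \frac{1}{e^{\new}_{n,\nn}(1)\cdot\vol(D(F)\bs D(\AAA)^1)} \sum_\pi m_\pi |\wh e^{\new}_{n,\nn}(\pi^S)|\wh\psi_S(\pi_S).
\]
The key device is the positive majorant $e^+_{n,\nn} := \prod_{\pp\mid\nn} \sum_{i=0}^n \binom{n}{i} e_{K_n(\pp^{r_\pp - i})}$, for which the triangle inequality gives the pointwise bound $|\wh e^{\new}_{n,\nn}(\pi^S)| \leq \wh{e^+_{n,\nn}}(\pi^S)$. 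Applying the trace formula to $e^+_{n,\nn}\otimes\psi_S$ and dividing by $e^{\new}_{n,\nn}(1)\cdot\vol(D(F)\bs D(\AAA)^1)$ (not by $e^+_{n,\nn}(1)$), the identity central term gives $(e^+_{n,\nn}(1)/e^{\new}_{n,\nn}(1))\mupl_S(\wh\psi_S)$, and the remaining central and orbital terms vanish by the analogs for $e^+_{n,\nn}$ of the bounds in Section \ref{sec:4} (these go through since $e^+_{n,\nn}$ is a nonnegative linear combination of the same idempotents). The main obstacle is verifying that $e^+_{n,\nn}(1)/e^{\new}_{n,\nn}(1)$ is uniformly bounded in $\nn$: the per-prime ratio is essentially $((1+q_\pp^{-n})/(1-q_\pp^{-n}))^n$, and the Euler product $\prod_{\pp}((1+q_\pp^{-n})/(1-q_\pp^{-n}))^n$ converges for $n\geq 2$ by comparison with $\zeta_F(n)$. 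Hence $\limsup_{N(\nn)\to\infty}|\nu_{S,\nn}(\wh f_S) - \mupl_S(\wh f_S)| = O(\epsilon)$, and letting $\epsilon\to 0$ concludes part (1).

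The fixed-central-character case (2) is proved identically, applied to $e^{\new}_{n,\nn,\chi}\otimes\phi_S$ with $\phi_S \in \HH(D(F_S),\chi_S)$: one uses the second form of the trace formula in Theorem \ref{Trace}, Lemma \ref{lem49} for the central-element bound, and Proposition \ref{prop410} for the orbital integrals. The positive majorant $e^+_{n,\nn,\chi}$ is defined analogously via the averaging map, and the same type of Euler-product argument bounds $e^+_{n,\nn,\chi}(1)/e^{\new}_{n,\nn,\chi}(1)$ uniformly in $\nn$.
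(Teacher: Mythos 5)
Your proof is correct, and for the first half (the smooth case $\wh f_S = \wh\phi_S$) it coincides with the paper's argument: plug $h_{n,\nn}\otimes\phi_S$ into the trace formula, note that the uniform support $\mathbf{K}^S\times\supp(\phi_S)$ meets only finitely many $D(F)$-conjugacy classes, dispose of the noncentral terms via Proposition \ref{prop41} (resp.\ \ref{prop410}), dispose of the non-identity central terms via the divisor estimate coming from Lemma \ref{lem43}, and the identity term leaves $\mupl_S(\wh\phi_S)$.

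Where you differ from the paper is the Sauvageot reduction, and you are more careful there. The paper passes to general $\wh f_S$ via the inequality $|I_\spec(\nn,\wh f_S) - I_\spec(\nn,\wh\phi_S)| \leq |I_\spec(\nn,\wh\psi_S)|$, which implicitly requires the spectral weights $m_\pi\,\wh e^{\new}_{n,\nn}(\pi^S)$ to be nonnegative. Because $e^{\new}_{n,\nn}$ is an \emph{alternating} sum of idempotents, Proposition \ref{prop34} only guarantees $\wh e^{\new}_{n,\nn}(\pi^S)\in\{0,1\}$ when $\pi^S$ is generic; for nongeneric $\pi^S$ (which do occur in the discrete spectrum) the trace can be negative --- for instance, if $\pi_\pp$ is an unramified character composed with $\det$ and $r_\pp = 1$, the local contribution is $\binom{n}{0} - \binom{n}{1} = 1 - n < 0$. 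Your positive majorant $e^+_{n,\nn}$ supplies the pointwise bound $|\wh e^{\new}_{n,\nn}(\pi^S)| \leq \wh{e^+_{n,\nn}}(\pi^S)$ unconditionally, and the Euler-product estimate for $\sup_\nn e^+_{n,\nn}(1)/e^{\new}_{n,\nn}(1)$ (each local factor is $1 + O_n(q_\pp^{-n})$ uniformly in $r_\pp$, so the product converges for $n\geq 2$) gives exactly the uniform control needed to close the $\epsilon$-argument. This addresses a genuine subtlety that the paper's ``by now standard'' phrasing leaves implicit, at the modest cost of the majorant construction and the bounded-ratio verification. The fixed-central-character case goes through the same way, as you note.
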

\begin{proof} 
We'll prove $(1)$ first; the proof of $(2)$ will be analogous. Let $h_{n,\nn}$ be as in Section \ref{sec:4} and consider a test function of the form $h_{n,\nn} \otimes \phi_S$, where $\phi_S \in \HH(D(F_S)^1)$.  We will first prove the theorem in the case where $\wh f_S = \wh \phi_S$.   

Using the trace formula, we have
\begin{align*} \sum_{\pi} m_{\pi} \wh h_{n,\nn}(\pi^S) \wh \phi_S(\pi_S) = &
	\sum_{z\in Z(F)} \vol(D(F) \bs D(\AAA)^1) h_{n,\nn}(z) \phi_S(z)
	\\ & + \sum_{\gamma \in (D(F) - Z(F))/\sim} O_{\gamma^S}(h_{n,\nn}) O_{\gamma_S}(\phi_S)
\end{align*}

For the second sum, there are only finitely many nonvanishing orbital integrals since the functions $h_{n,\nn}\phi_S$ are uniformly supported on $\mathbf{K}^S \times \supp(\phi_S)$, and this support intersects only finitely many conjugacy classes of elements of $D(F)$.  Each of them vanishes asymptotically by Proposition \ref{prop41}, so the second sum goes to zero.

For the first sum, assume $z\neq 1$; we'll show $h_{n,\nn} \to 0$ as $n\to\infty$.  We note that $\one_{K_n(\dd)}(z) = 1$ if and only if $z - 1 \in \dd$.  Since
$$|h_{\nn,n}| \leq 3 \sum_{\dd \mid \nn} \frac{N(\dd)}{N(\nn)} \one_{K_n(\dd)}$$
then we have
$$|h_{\nn,n}(z)| \leq d(z - 1) \frac{N_{F^S}(z-1)}{N(\nn)}$$
which approaches zero as $N(\nn) \to \infty$ (here  $d(z-1)$ is the number of ideals dividing the ideal $(z-1)$.)

As such, we have
\begin{align*} \lim_{N(\nn) \to \infty} \sum_{\pi} m_\pi h_{n,\nn}(\pi^S) \wh \phi_S(\pi_S) 
	& = \vol(D(F) \bs D(\AAA)^1) \cdot h_{n,\nn}(1)\cdot \wh \phi_S(1) 
\\ & = \vol(D(F) \bs D(\AAA)^1)\cdot \mupl(\wh\phi_S)\end{align*}
so that
$$\lim_{N (\nn) \to \infty} \frac{1}{e^{\new}_{n,\nn}(1) \vol(D(F)\bs D(\AAA)^1)} \sum_{\pi} m_\pi \wh e^{\new}_{n,\nn}(\pi^S) \wh\phi_S(\pi_S) = \mupl(\wh \phi_S).$$
Moreover, $m_\pi = 1$ by the multiplicity one theorem of \cite{Bad07}.

With this in hand, we may prove the same result when $\wh f_S$ is an arbitrary element of $\mathscr{F}(D(F_S)^1)$ (this argument is by now standard and is repeated here for completeness).  For simplicity, we may write the quantity inside the limit as $I_{\spec}(\nn,\, \wh\phi_S)$.  Fix $\epsilon > 0$.  Using Sauvageot's density theorem we may find functions $\phi_S,\, \psi_S \in \HH(D(F_S)^1)$ such that
\begin{itemize}
	\item For all $\pi_S \in D(F_S)^{1,\wedge}$, we have $|\wh f_S(\pi_S) - \wh \phi_S(\pi_S)| \leq \wh\psi_S(\pi_S)$, and
	\item $\mupl(\psi_S) < \epsilon/4$.
\end{itemize}

If $\nn$ is sufficiently high, we have $|I_{\spec}(\nn,\,\wh\phi_S) - \mupl(\wh \phi_S)|,\, |I_{\spec}(\nn,\,\wh\psi_S) - \mupl(\wh \psi_S)| < \epsilon/4$.  Then we have
\begin{align*} |I_{\spec}(\nn,\,\wh f_S) - \mupl(\wh f_S)| & 
	\leq |I_{\spec}(\nn,\, \wh f_S) - I_{\spec}(\nn,\, \wh \phi_S)| + 
	|I_{\spec}(\nn,\, \wh \phi_S) - \mupl(\wh\phi_S)| +
	|\mupl(\wh\phi_S) - \mupl(\wh f_S)|
	\\ & \leq |I_{\spec}(\nn,\,\wh\psi_S)| + |I_{\spec}(\nn,\, \wh \psi_S) - \mupl(\wh\phi_S)| + |\mupl(\wh\psi_S)|
\end{align*}
The first term is at most $\epsilon/2$, since $\mupl(\wh \psi_S) \leq \epsilon/4$ and $ |I_{\spec}(\nn,\,\wh\psi_S) - \mupl(\wh\psi_S)| < \epsilon/4$.  The second and third terms are both bounded by $\epsilon/4$, so we have 
$$|I_{\spec}(\nn,\,\wh f_S) - \mupl(\wh f_S)| < \epsilon$$
completing the proof.

The proof of (2) is entirely analogous, except that we refer to Proposition \ref{prop410} instead of Proposition \ref{prop41}, and there is only one central term in the trace formula.  
\end{proof}

With this in hand, we are ready to complete the proof of the theorem.  Following Remark \ref{rem26}, the above argument proves the following:

\begin{cor} Fix $\wh f_S$ and write $\wh f_S = \wh f_S \cdot \one_t + \wh f_S \cdot \one^c_{t}$, where $\one_t$ is the characteristic function of the tempered spectrum and $\one_{t}^c$ is the characteristic function of its complement.  Then
$$\lim_{N (\nn) \to \infty} \frac{1}{e^{\new}_{n,\nn}(1) \vol(D(F)\bs D(\AAA)^1)} \sum_{\pi} \wh{e}^{\new}_{n,\nn}(\pi^S)\cdot (\wh f_S\cdot \one_t)(\pi_S) = \mupl(\one_t \cdot \wh f_S) = \mupl(\wh f_S)$$
and
$$\lim_{N (\nn) \to \infty} \frac{1}{e^{\new}_{n,\nn}(1) \vol(D(F)\bs D(\AAA)^1)} \sum_{\pi} \wh{e}^{\new}_{n,\nn}(\pi^S)\cdot(\wh f_S\cdot \one_t^c) (\pi_S) = \mupl(\one_t^c \cdot \wh f_S) = 0.$$

The analogous result holds in the fixed-central-character case.
\end{cor}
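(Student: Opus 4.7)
The plan is to reduce the statement directly to Proposition \ref{prop52} by decomposing $\wh f_S$ as $\wh f_S \cdot \one_t + \wh f_S \cdot \one_t^c$ and applying the proposition to each piece separately. The first step is to verify that both $\wh f_S \cdot \one_t$ and $\wh f_S \cdot \one_t^c$ still lie in $\mathscr{F}(D(F_S)^1)$, so that Proposition \ref{prop52} is applicable. This is precisely the observation recorded in Remark \ref{rem26}: Sauvageot's construction produces, for any bounded subset of $D(F_S)^{1,\wedge}$ disjoint from the tempered spectrum, a Hecke trace $\wh \phi$ that dominates its indicator while having arbitrarily small Plancherel integral. Since $\wh f_S$ is bounded and supported on finitely many Bernstein components, the same finite-support-plus-boundedness property passes to $\wh f_S \cdot \one_t$ and $\wh f_S \cdot \one_t^c$; Sauvageot's construction additionally implies that the discontinuities of $\one_t$ within the support of $\wh f_S$ form a Plancherel-null set, so both pieces remain continuous outside a set of Plancherel measure zero and therefore belong to $\mathscr{F}(D(F_S)^1)$.

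Next, I would apply Proposition \ref{prop52}(1) to each piece in turn. For the tempered part, the proposition yields that the spectral limit equals $\mupl(\wh f_S \cdot \one_t)$. Since $\mupl$ is by construction supported on the tempered spectrum (Definition \ref{Plancherel}), $\one_t = 1$ almost everywhere with respect to $\mupl$ and hence $\mupl(\wh f_S \cdot \one_t) = \mupl(\wh f_S)$, as claimed. For the complementary piece, the same application gives the limit $\mupl(\wh f_S \cdot \one_t^c) = 0$, again because the non-tempered locus is $\mupl$-null.

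The fixed-central-character statement is proved by the identical two-step argument, substituting Proposition \ref{prop52}(2) for Proposition \ref{prop52}(1) and appealing to the fact that $\mupl_{S,\chi}$ is likewise supported on the tempered spectrum of $D(F_S)^{\wedge,\chi}$. No additional estimates are needed beyond those already established. The only genuine content is the stability of $\mathscr{F}(D(F_S)^1)$ under multiplication by $\one_t$, which is the main obstacle but is addressed already by Sauvageot's lemma as cited in Remark \ref{rem26}; the rest of the argument is purely formal.
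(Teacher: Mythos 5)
Your proposal is correct and takes essentially the same route the paper intends: the paper dispatches this corollary in one line (``Following Remark \ref{rem26}, the above argument proves the following'') precisely because the only nontrivial ingredient is the stability of $\mathscr{F}(D(F_S)^1)$ under multiplication by $\one_t$ and $\one_t^c$, which is the content of Remark \ref{rem26}, after which Proposition \ref{prop52} applies directly to each piece and the support property of $\mupl$ from Definition \ref{Plancherel} gives $\mupl(\wh f_S \cdot \one_t) = \mupl(\wh f_S)$ and $\mupl(\wh f_S \cdot \one_t^c) = 0$.
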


We now proceed to the proof of the theorem.

\begin{proof} Fix $\pi$ such that $\wh h_{n,\nn}(\pi^S) \wh f_S(\pi_S) \neq 0$, and consider the image $\pi'$ under the Jacquet-Langlands functor of \cite{Bad07}.  If $\pi'$ is cuspidal, then $\pi'^S \cong \pi^S$ is generic everywhere.  If $\pi'$ is not cuspidal, then it follows from \cite[Theorem 4.3]{Wal84} (in the archimedean case) and \cite[Proposition 4.10]{Clo90} (in the non-archimedean case) that $\pi$ is not tempered at any split place and in particular it is not tempered at the place $v_0\in S$.  (In the case of $\GL_n$-representations, this can also be seen directly from the characterization of the residual spectrum in \cite{MW89}).

As such, we have
\begin{align*} \mupl(\wh f_S) & 
	= \lim_{N (\nn) \to \infty} \frac{1}{e^{\new}_{n,\nn}(1) \vol(D(F)\bs D(\AAA)^1)} 
		\sum_{\pi} \wh e^{\new}_{n,\nn}(\pi^S) (\wh f_S\cdot \one_t)(\pi_S)
	\\ & = \lim_{N (\nn) \to \infty} \frac{1}{e^{\new}_{n,\nn}(1) \vol(D(F)\bs D(\AAA)^1)} 
		\sum_{\pi^S \text{ generic}} \wh e^{\new}_{n,\nn}(\pi^S) \wh f_S(\pi_S)
	\\ & = \lim_{N (\nn) \to \infty} \frac{1}{e^{\new}_{n,\nn}(1) \vol(D(F)\bs D(\AAA)^1)} 
		\sum_{\substack{\pi^S \text{ generic} \\ c(\pi^S) =\nn} } \wh f_S(\pi_S)
\end{align*}
in the non-fixed-central-character case.  The fixed-central-character case is entirely analogous.
\end{proof}

\end{document}